\newtheorem{theorem}{Theorem}[section]
\newtheorem{lemma}[theorem]{Lemma}
\newtheorem{corollary}[theorem]{Corollary}
\newtheorem{conjecture}[theorem]{Conjecture}
\theoremstyle{definition}
\newtheorem{definition}[theorem]{Definition}
\newtheorem{remark}[theorem]{Remark}
\numberwithin{equation}{section}
\begin{document}
\title[Existence and Nonexistence of nontrivial positive solution]{Liouville's theorems to quasilinear differential inequalities
involving gradient nonlinearity term on manifolds}
\author[Sun]{Yuhua Sun}
\address{School of Mathematical Sciences and LPMC, Nankai University, 300071
Tianjin, P. R. China}
\email{sunyuhua@nankai.edu.cn}
\thanks{ Sun was supported by the National Natural Science Foundation of
China (No.11501303, No.11871296), and Tianjin Natural Science Foundation (No.19JCQNJC14600) }

\author[Xu]{Fanheng Xu}
\address{School of Mathematics (Zhuhai), Sun Yat-Sen University, 519082,
Zhuhai, P. R. China}
\email{xufh7@mail.sysu.edu.cn}
\thanks{Xu was supported by the Fundamental Research Funds for the Central Universities, SYSU (No.20lgpy154) }

\subjclass[2010]{Primary: 58J05, Secondary: 35J70}

%\date{}

%\dedicatory{This paper is dedicated to our advisors.}
\keywords{Liouville's theorems; manifolds;
negative power; sharp volume growth}

\begin{abstract}
We investigate the nonexistence and existence of nontrivial positive solutions to
$\Delta_m u+u^p|\nabla u|^q\leq0$ on noncompact geodesically complete Riemannian manifolds, where $m>1$,
and $(p,q)\in \mathbb{R}^2$.
According to classification of $(p, q)$, we establish different volume growth conditions to obtain
Liouville's theorems for the above quasilinear differential inequalities, and we also show these volume growth conditions are sharp
in most cases.
Moreover, the results are completely new for $(p, q)$ of negative pair, even in
the Euclidean space.
\end{abstract}

\maketitle

\section{Introduction}
Let $M$ be a noncompact geodesically complete Riemannian manifold, and let us consider the following quasilinear elliptic differential inequality
\begin{equation}\label{equ}
\Delta_m u+u^p |\nabla u|^q \leq 0,\quad\mbox{on $M$},
\end{equation}
where $m>1$, $\Delta_mu=\text{div}(|\nabla u|^{m-2}\nabla u)$, and $(p, q)\in \mathbb{R}^2$, which means that $p, q$ can be allowed to be negative.

Throughout the paper, denote $d(x,y)$ the geodesic distance on
$M$, and $\mu$ the Riemannian measure on $M$. Set
\begin{align*}
B(x, r)=\left\{y\in M: d(x,y)<r\right\}.
\end{align*}
Let $o$ be a reference point on $M$. For our convenience, denote $B_r=B(o,r)$ and
\begin{equation*}\label{vol-def}
V(r)=\mu(B_r).
\end{equation*}

The paper is devoted to study the nonexistence and existence of nontrivial positive solutions to (\ref{equ}). Here
``nontrivial" means that the solution is not constant.
We aim to study how large $V(r)$ can be allowed to suffice that the Liouville's theorem holds
for nontrivial positive solutions to (\ref{equ}).  More precisely,
by classifying $(p, q)$, and then establishing different volume growth conditions even without imposing any curvature conditions
on $M$, we obtain the
nonexistence of
nontrivial positive solution to (\ref{equ}). Later, we also show our volume growth conditions are sharp in most cases.

Such related problems to (\ref{equ}) is well-studied in Euclidean space, for example, see the work of Mitidieri-Pohozaev \cite{MP99, MP01}, and Serrin-Zou \cite{SZ02}.
Though we investigate the same problems on manifolds, however our interest is different.
On one hand,
in Euclidean space, people incline to find the critical exponent regarding to $(p,q)$ which is usually related to the dimension of Euclidean space. But here,
we transfer to find
the ``sharp" volume growth condition, which may have no relation with the dimension of the manifold.
On the other hand, to our best knowledge, there seems much less known
when $p,q$ are negative even in Euclidean space. Some papers can only allow $p$ to be negative, but always require $q$ to be nonnegative, see \cite{CC15, CR20, FQS, Filip09, Filip11, FPS20,HB15, LiLi12}. Our paper will treat all the cases of $(p,q)$, and fill up
the gap when $q$ is negative, especially in the Euclidean space.

Our paper is greatly inspired by Grigor'yan and the first author's paper \cite{Grigoryan13}, which is very valid to deal with the semilinear differential inequality especially on manifolds.
Actually, the technique used in \cite{Grigoryan13} works only to the case of $p+q>m-1, p\geq0, q<m$.
Our technique presented here can be seen as a further refined version of \cite{Grigoryan13}, and
is quite powerful, and especially can be used to deal with the term $u^p|\nabla u|^q$ of negative power.
Hence, probably it is the first time to obtain the Liouville's theorem concerning $(p,q)$ of negative pair, and our technique here is quite novel in this respect.

As we see later, there are significant differences when $(p,q)$ takes different values. To express our classification more clearly, let us divide $\mathbb{R}^2$ into five parts (see Figure \ref{graph0})
\begin{align*}
&G_1= \{ (p, q)| p\geq 0, m-1-p<q<m \}, &&G_2= \{ (p, q)| q\geq m \}, \\
&G_3= \{(p, q)| p<0, m-1<q<m \}, 	&&G_4= \{(p, q)| p< m-1-q, q=m-1\},\\
&G_5= \{(p, q)| p= m-1-q, q\leq m-1\}, &&G_6=\{(p, q)| p< m-1-q, q< m-1\}.
\end{align*}

\begin{figure}[h]
\begin{tikzpicture}
\fill[blue, fill opacity=0.73] (-4, 2) rectangle (4, 3);
\fill[red, fill opacity=0.57] (-4, 1) rectangle (0, 2);
\fill[green, fill opacity=0.39] (0, 1)--(0, 2)--(4, 2)--(4, -3);
\fill[yellow, fill opacity=0.25] (0, 1)--(-4, 1)--(-4, -3)--(4, -3);
\draw[thick, ->] (-4, 0)--(4, 0) node[right] {$p$ };
\draw[thick, ->] (0, -3)--(0, 3) node[above] {$q$ };
\draw (-4, 2)--(4, 2);
\draw[dashed, very thick] (0, 1)--(4, -3);
\draw[dashdotted, very thick] (-4, 1)--(0, 1);
\node at (-3, 1.2) {\begin{scriptsize}$q=m-1$\end{scriptsize}};
\node at (-3.2, 2.2) {\begin{scriptsize}$q=m$\end{scriptsize}};
\node at (2.8, -.8) {\begin{scriptsize}$p+q=m-1$\end{scriptsize}};
\node at (5.2, 3.5-3) {$G_1$};
\node at (5.2, 3.5-2) {$G_3$};
\node at (5.2, 3.5-1) {$G_2$};
\node at (5.2, 3.5-4) {$G_6$};
\node at (5.2, 3.5-5) {$G_5$};
\node at (5.2, 3.5-6) {$G_4$};
\draw (4.55, 3.35-1) rectangle (4.85, 3.65-1);
\fill[blue, fill opacity=0.73] (4.55, 3.35-1) rectangle (4.85, 3.65-1);
\draw (4.55, 3.35-2) rectangle (4.85, 3.65-2);
\fill[red, fill opacity=0.57] (4.55, 3.35-2) rectangle (4.85, 3.65-2);
\draw (4.55, 3.35-3) rectangle (4.85, 3.65-3);
\fill[green, fill opacity=0.39] (4.55, 3.35-3) rectangle (4.85, 3.65-3);
\draw (4.55, 3.35-4) rectangle (4.85, 3.65-4);
\fill[yellow, fill opacity=0.25] (4.55, 3.35-4) rectangle (4.85, 3.65-4);
\draw[dashed, very thick] (4.3, 3.55-5)--(4.9, 3.55-5);
\draw[dashdotted, very thick] (4.25, 2.55-5)--(4.9, 2.55-5);
\end{tikzpicture}
\caption{$(p, q)$}
\label{graph0}
\end{figure}
Our main results are as follows:
\begin{theorem}\label{thm-non}
\rm{Let $M$ be a noncompact geodesically complete manifold.
\begin{enumerate}
\item[\textbf{(I)}]{ Assume $(p, q) \in G_1$. If
\begin{equation}\label{vol-1}
V(r) \lesssim r^{\frac{mp+q}{p+q-m+1}} \left( \ln r \right)^{\frac{m-1}{p+q-m+1}},\quad\mbox{for all large enough $r$},
\end{equation}
then (\ref{equ}) possesses no nontrivial positive solution.}

\item[\textbf{(II)}] {Assume $(p, q) \in G_2$. If
\begin{equation}\label{vol-2}
V(r)\lesssim r^{m}(\ln r)^{m-1},\quad\mbox{for all large enough $r$},
\end{equation}
then (\ref{equ}) possesses no nontrivial positive solution.}

\item[\textbf{(III)}]{ Assume $(p, q) \in G_3$. If
\begin{align}\label{vol-3}
V(r) \lesssim r^{\frac{q}{q-m+1}} \left( \ln r \right)^{\frac{m-1}{q-m+1}},\quad\mbox{for all large enough $r$},
\end{align}
then (\ref{equ}) possesses no nontrivial positive solution.}

\item[\textbf{(IV)}]{ Assume $(p, q) \in G_4$. For any given $\alpha>0$, if
\begin{align}\label{vol-4}
V(r) \lesssim r^{\alpha},\quad\mbox{for all large enough $r$},
\end{align}
then (\ref{equ}) possesses no nontrivial positive solution.}

\item[\textbf{(V)}]{Assume $(p, q) \in G_5$. For given $\kappa$ satisfying
$0<\kappa<\frac{\min\{m-1,1\}}{2e},$ 
if
\begin{equation}\label{vol-5}
V(r)\lesssim e^{\kappa r},\quad\mbox{for all large enough $r$},
\end{equation}
then (\ref{equ}) possesses no nontrivial positive solution.}

\item[\textbf{(VI)}]{ Assume $(p, q) \in G_6$. For given $\kappa$ satisfying $0<\kappa<\frac{m-1-q}{m-1-p-q}$, if
\begin{align}\label{vol-6}
V(r)\lesssim e^{\kappa r\ln r},\quad\mbox{for all large enough $r$},
\end{align}
then (\ref{equ}) possesses no nontrivial positive solution.}
\end{enumerate}}

\end{theorem}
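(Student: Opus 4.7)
The strategy is the Mitidieri--Pohozaev test function method, refined as in \cite{Grigoryan13}. Assume $u$ is a nontrivial positive solution of (\ref{equ}). I will multiply the inequality by a test function of the form $\eta = u^{s}\varphi^{k}$, where $s<0$ and $k$ are exponents tuned to the region $G_i$, and $\varphi\in C_c^\infty(M)$ is a cutoff supported in $B_{2R}$ equal to $1$ on $B_R$. Integrating, using the chain rule for $\Delta_m$ acting on $u^{s+1}$, and applying Young's inequality to the boundary-type terms yields a master inequality of the shape
\[
\int_M u^{s+p}|\nabla u|^q \varphi^{k}\, d\mu + \int_M u^{s-1}|\nabla u|^{m}\varphi^{k}\, d\mu \;\lesssim\; \int_M u^{s+m-1}\,\varphi^{k-m}|\nabla \varphi|^{m}\, d\mu,
\]
after absorbing part of the right-hand side into the left. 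The whole proof reduces to showing that, for the right choice of cutoff, the remaining residual is controlled by a function of $R$ (times $V(R)$) which tends to $0$ as $R\to\infty$; the volume hypothesis then forces $|\nabla u|\equiv 0$, contradicting nontriviality.

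For parts \textbf{(I)}--\textbf{(IV)} I will take $\varphi(x)=\psi(d(x,o)/R)$ with $\psi$ smooth, so that $|\nabla \varphi|\lesssim 1/R$ on $B_{2R}\setminus B_R$. In $G_1, G_3$ the exponent $s$ is chosen so that $s+p<0$ and the Young constant stays below $1$; balancing the homogeneities $(q,m,s+p,s-1)$ produces the critical polynomial powers $(mp+q)/(p+q-m+1)$ and $q/(q-m+1)$ in (\ref{vol-1}) and (\ref{vol-3}). The logarithmic refinement $(\ln r)^{(m-1)/(p+q-m+1)}$ does not come from a single application of Young's inequality but from iterating the scheme: one uses Hölder's inequality to reinvest the first bound into itself, picking up the logarithmic gain exactly as in \cite{Grigoryan13}. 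Part \textbf{(II)} handles $q\geq m$ by reducing to the borderline case $q=m$ within the same framework, since the extra factor $|\nabla u|^{q-m}$ can be Young-absorbed. Part \textbf{(IV)} is the degenerate case $q=m-1$ with $p<0$, where the relevant power of $|\nabla\varphi|$ is exactly $1$; this decouples the estimate from $p$ and permits polynomial volume growth of arbitrary order $\alpha$ by choosing $s$ depending on $\alpha$.

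Parts \textbf{(V)} and \textbf{(VI)} lie in the subcritical region $p+q\leq m-1$, where no polynomial cutoff can force the residual to vanish. Instead I will use an exponential cutoff. In \textbf{(V)}, one takes (a mollification of) $\varphi(x)=\exp(-\kappa d(x,o))$ truncated outside $B_{2R}$, so that $|\nabla\varphi|\leq \kappa\varphi$; the Young constant arising after absorption is proportional to $\kappa \cdot e$, and absorption requires $\kappa \cdot e<\tfrac{1}{2}\min\{m-1,1\}$, which is exactly the threshold $\kappa<\min\{m-1,1\}/(2e)$ asserted in (\ref{vol-5}). In \textbf{(VI)}, since $p<m-1-q$ strictly, a single exponential is insufficient and I will use $\varphi(x)=\exp(-\kappa d(x,o)\ln d(x,o))$, whose gradient carries an extra $\ln r$ factor; matching this factor against the homogeneity $(m-1-q)/(m-1-p-q)$ gives (\ref{vol-6}).

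The principal obstacle is the bookkeeping of constants in Young's inequality in \textbf{(V)}: one must split the residual into a genuinely absorbable piece and a remainder that is still integrable against $e^{\kappa r}\, dr$ uniformly as $\kappa$ approaches the critical value $\min\{m-1,1\}/(2e)$. Any lossy step destroys the sharp $1/(2e)$. A secondary technical difficulty is verifying that the admissibility conditions on $s$ and $k$ can be met simultaneously in each sub-case of the corresponding $G_i$; this is exactly why $\mathbb{R}^2$ is split into six regions rather than handled uniformly. Once the master inequality and the two families of cutoffs (polynomial for $G_1$--$G_4$, exponential for $G_5$--$G_6$) are in place, each of \textbf{(I)}--\textbf{(VI)} follows by a matched choice of the parameters $(s,k,\kappa)$.
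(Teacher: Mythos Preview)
Your master inequality and the use of $u^{s}\varphi^{k}$ as test function match the paper's Lemma~\ref{lem1}, and the broad strategy for \textbf{(I)}--\textbf{(IV)} is correct. Two details are off, however. The logarithmic gain in (\ref{vol-1}) and (\ref{vol-3}) is not obtained by ``iterating'' the estimate; it comes from a single application with the dyadically averaged cutoff $\varphi_i=\frac{1}{i}\sum_{k=i+1}^{2i}\eta_k$ of \cite{Grigoryan13}, combined with sending the exponent $a$ (your $-s$) to a specific limiting value along $i\to\infty$. And in \textbf{(IV)} the exponent on $|\nabla\varphi|$ is not $1$: since $q=m-1$ the paper takes $a=l(m-1)+\tfrac{1}{i}$ with $l$ large depending on $\alpha$, so that the power $\frac{mp+q-a}{p}$ is large enough to dominate $r^{\alpha}$.

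The genuine gap is in \textbf{(V)} and \textbf{(VI)}. An exponential cutoff $\varphi=e^{-\kappa r}$ (truncated) does not close your master inequality: after Young you are left with $\int u^{s+m-1}\varphi^{k}\,d\mu$ on the right, and on $G_5$ one has $s+m-1=s+p+q$, which matches neither $u^{s+p}|\nabla u|^{q}$ nor $u^{s-1}|\nabla u|^{m}$, so there is no term to absorb it into; moreover the truncation produces a boundary layer on $B_{2R}\setminus B_R$ where $|\nabla\varphi|\le\kappa\varphi$ fails and you have no a~priori control. The paper's route is entirely different. For \textbf{(V)} it performs the substitution $u=e^{v}$, which (exactly because $p+q=m-1$ on $G_5$) removes the zero--order dependence and yields $\Delta_m v+(m-1)|\nabla v|^{m}+|\nabla v|^{q}\le 0$; interpolating gives $\Delta_m v+l|\nabla v|^{\lambda}\le 0$ with $l=\min\{m-1,1\}$ for every $\lambda\in(m-1,m)$, and one then tests with an ordinary cutoff $\varphi_R$ raised to the power $z=\lambda/(\lambda-m+1)$, linking $z=\theta R$ and letting $\lambda\to(m-1)_+$. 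The threshold $\min\{m-1,1\}/(2e)$ arises from optimising $\theta$ in $2\kappa+\theta\ln(C_1\theta)<0$, not from a Young constant. For \textbf{(VI)} the paper again uses only the standard cutoff $\varphi_R$, but sends the test--function parameters to infinity with $R$ (specifically $a=2R$ and $b=\tfrac{2(q-m)}{p+q-m+1}a$), so that the exponent $\frac{mp+q+a(q-m)}{p+q-m+1}$ on $|\nabla\varphi|$ is itself of order $R$; this is what beats $e^{\kappa R\ln R}$, and careful bookkeeping of the resulting $a\ln a$ terms gives the threshold $\kappa<\frac{m-1-q}{m-1-p-q}$. No exponential cutoff is used in either part.
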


\begin{remark}
\rm{ Concerning Theorem \ref{thm-non}, we have the following comments:
\begin{enumerate}
\item[(i)]{We emphasize that there is no any curvature assumption on manifold $M$. Theorem \ref{thm-non} \textbf{(I)} $\&$ \textbf{(III)} are understood in this way: when $(p, q)$ of problem (\ref{equ}) is fixed, the volume condition (\ref{vol-1}) and (\ref{vol-3})
show that how large the volume can grow to suffice the Liouville's results hold.}

\item[(ii)]{It is very interesting to see that the volume growth varies not uniformly with respect to $(p, q)$.
One of the reasons
to induce such phenomenon is that the interaction between $u^p$, $|\nabla u|^q$ and $\Delta_m u$. In $G_2, G_4, G_5$ and $G_6$, $(p,q)$ does not play any role in the volume growth condition; But in $G_3$, $p$ does not make any contribution to the volume growth. }

\item[(iii)]{In Theorem \ref{thm-non} \textbf{(II)}, volume condition (\ref{vol-2}) can be relaxed to a integral form
\begin{equation*}\label{vol-2-0}
\int^{+\infty} \left(\frac{r}{V(r)}\right)^{\frac{1}{m-1}} dr = \infty,
\end{equation*}
see (\ref{vol-m}) below.}

%\item[(iv)]{In Theorem \ref{thm-non} \textbf{(IV)}, it is very surprising to see that, no matter how the volume of the geodesic ball changes, there exists no nontrivial positive solution when $(p, q)\in G_4$.
% }

\item[(iv)]{Though $G_4, G_5$ are the boundaries of $G_6$, but the upper volume growths of $G_4, G_5, G_6$ behave quite different from each other.}

\item[(v)]{As one can see from the proof of Theorem \ref{thm-non}, Liouville's results are still valid to
$\Delta_m u+lu^p|\nabla u|^q\leq0$ on $M$, where $l$ is some positive constant.}
\end{enumerate}
}
\end{remark}

By letting $p=q=0$, and replacing $u$ by $cu$, it is easy to derive the following from Theorem \ref{thm-non} \textbf{(VI)}.
\begin{corollary}\label{cor-1}
\rm{Let $M$ be a noncompact geodesically complete manifold.
For given $\kappa$ satisfying $0<\kappa<1$, if
\begin{align*}\label{vol-3-4-2}
V(r)\lesssim e^{\kappa r\ln r},\quad\mbox{for all large enough $r$},
\end{align*}
then for any constant $l>0$, the following differential inequality
\begin{equation}\label{msuper}
\Delta_m u+l\leq0,\quad\mbox{on $M$}
\end{equation}
admits no positive solution.}
\end{corollary}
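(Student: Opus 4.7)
The plan is to reduce to the case $l=1$ via a standard rescaling, then invoke Theorem \ref{thm-non} \textbf{(VI)} with $p=q=0$. First I would check that the choice $p=q=0$ is legitimate: since $m>1$ the standing hypothesis, one has $q=0<m-1$ and $p=0<m-1=m-1-q$, so indeed $(0,0)\in G_6$. Moreover, the exponent in the theorem becomes $\frac{m-1-q}{m-1-p-q}=\frac{m-1}{m-1}=1$, so the admissible range of $\kappa$ in Theorem \ref{thm-non} \textbf{(VI)} is exactly $0<\kappa<1$, matching the hypothesis of the corollary.

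Second, I would carry out the rescaling. Suppose for contradiction that $u$ is a positive solution to $\Delta_m u+l\le 0$ on $M$. Set $v:=cu$ with $c>0$ to be determined. Using the $(m-1)$-homogeneity $\Delta_m(cu)=c^{m-1}\Delta_m u$, one computes
\begin{equation*}
\Delta_m v+1=c^{m-1}\Delta_m u+1\le -c^{m-1}l+1,
\end{equation*}
which is $\le 0$ as soon as $c=l^{-1/(m-1)}$. Hence $v$ is a positive solution of $\Delta_m v+1\le 0$, i.e.\ of (\ref{equ}) with $p=q=0$ (interpreting $u^0|\nabla u|^0\equiv 1$).

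Third, I would observe that any such $v$ must in fact be nontrivial (nonconstant). Indeed, if $v$ were a positive constant then $\Delta_m v\equiv 0$, giving $\Delta_m v+1=1>0$, a contradiction. So $v$ is a nontrivial positive solution to (\ref{equ}) for the pair $(0,0)\in G_6$. Since $V(r)\lesssim e^{\kappa r\ln r}$ by assumption with the same $\kappa\in(0,1)$, Theorem \ref{thm-non} \textbf{(VI)} rules out the existence of such $v$, which is the desired contradiction.

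There is no real obstacle here: the corollary is essentially a packaging of Theorem \ref{thm-non} \textbf{(VI)} together with the scaling symmetry $u\mapsto l^{-1/(m-1)}u$ of the $m$-Laplacian. The only subtle point worth flagging is the convention $u^{0}|\nabla u|^{0}\equiv 1$ (in particular at critical points of $u$), which is the natural reading needed to identify (\ref{msuper}) with the $p=q=0$ instance of (\ref{equ}); and the harmless observation that no positive constant can satisfy (\ref{msuper}) since $l>0$, which lets us upgrade ``no nontrivial positive solution'' to ``no positive solution''.
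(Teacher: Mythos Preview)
Your proposal is correct and follows exactly the route sketched in the paper: set $p=q=0$ (so $(0,0)\in G_6$ and the threshold $\frac{m-1-q}{m-1-p-q}$ equals $1$), rescale $u\mapsto cu$ via the $(m-1)$-homogeneity of $\Delta_m$ to absorb the constant $l$, and invoke Theorem~\ref{thm-non}~\textbf{(VI)}. Your added remarks on the convention $u^{0}|\nabla u|^{0}\equiv 1$ and on why a positive constant cannot solve (\ref{msuper}) simply make explicit details the paper leaves implicit.
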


\begin{remark}
\rm{The above corollary
can be seen as a generalization of Liouville's results concerning the so-called $m\text{-}$superharmonic function (namely,  we say $u$ in (\ref{msuper}) of $l=0$ is
  $m\text{-}$superharmonic), see Cheng-Yau's paper \cite{ChengYau} for $m=2$, and Holopainen's paper \cite{Holo99} for general $m>1$. For more Liouville's results in this area, please see Serrin-Zou's paper \cite{SZ02}. }
\end{remark}

Now let us give some motivation for using volume condition. Though the volume of geodesic ball is a very simple geometric quantity of manifold, but it plays a very important role when one studies the geometric and
probabilistic properties, and also estimate of heat kernel of Laplace-Beltrami operator on manifolds, please see a recent nice survey \cite{Grigoryan20}.

Among these properties,
the interaction between volume growth and nonexistence of solution to elliptic differential inequalities has been known for a long time.
A pioneering theorem of Cheng-Yau in the paper \cite{ChengYau}
says that if there exists a sequence $r_k\to\infty$ such that for all $k>0$
\begin{align*}
V(r_k)\lesssim r_k^2,
\end{align*}
then $M$ is parabolic. Here we call that a manifold $M$ is parabolic, if any positive superharmonic function on $M$ is constant.

Later, Grigor'yan, Karp, Varopoulos \cite{Grigoryan83, Karp82, Va83} independently studied parabolicity of manifold, and generalized Cheng-Yau's result to an integral version, namely, if
\begin{align}\label{vol-pra}
\int^{\infty}\frac{r}{V(r)}dr=\infty,
\end{align}
then $M$ is parabolic. Here $\int^{\infty}$ means that we take the integral near infinity.
It is worth to point out that the condition (\ref{vol-pra}) is sharp but not a necessary condition for parabolicity of $M$, see \cite{Grigoryan99, Grigoryan20}.

In \cite{Holo99}, Holopainen investigated  $m\text{-}$superharmonic function, and proved that, if
\begin{equation}\label{vol-m}
\int^{\infty}\left(\frac{r}{V(r)}\right)^{\frac{1}{m-1}}dr=\infty,
\end{equation}
then any nonnegative solution to $\Delta_m u\leq0$ is identical constant. In particular, (\ref{vol-m}) is satisfied
if (\ref{vol-2}) holds for all large enough $r$.

In \cite{Grigoryan13}, Grigor'yan and the first author studied (\ref{equ}) under the case of $m=2$, $q=0$ and $p>1$, which is
\begin{align}\label{equ-sem}
\Delta u+u^p\leq0,\quad\mbox{on $M$}.
\end{align}
They obtained that if
\begin{align}\label{vol-sem}
V(r)\lesssim r^{\frac{2p}{p-1}}(\ln r)^{\frac{1}{p-1}},\quad\mbox{for all large enough $r$},
\end{align}
then any nonnegative solution to (\ref{equ-sem}) is identical zero.
The exponents $\frac{2p}{p-1}$ and $\frac{1}{p-1}$ in (\ref{vol-sem})
are sharp, and
cannot be relaxed here, namely, if the exponent $\frac{1}{p-1}$ is relaxed a little by $\frac{1}{p-1}+\epsilon$ for any positive $\epsilon$
close to zero, there exists a manifold satisfying (\ref{vol-sem}) but admits a positive solution to (\ref{equ-sem}).
The above result can be also reformulated equivalently as follows:
if for given $\alpha>2$,
\begin{align*}
V(r)\lesssim r^{\alpha}(\ln r)^{\frac{\alpha-2}{2}},\quad\mbox{for all large enough $r$},
\end{align*}
then for $1<p\leq \frac{\alpha}{\alpha-2}$, any nonnegative solution to (\ref{equ-sem})
is identical zero.

Concerning quasilinear differential inequalities, the Liouville's result was obtained by the first author in \cite{Sun15} under the case of
$m>1, p>m-1, q=0$: if
\begin{equation}
V(r)\lesssim r^{\frac{mp}{p-m+1}}(\ln r)^{\frac{m-1}{p-m+1}},\quad\mbox{for all large enough $r$},
\end{equation}
then
\begin{equation*}
\Delta_m u+u^p\leq0,\quad\mbox{on $M$}
\end{equation*}
admits no nonnegative solution except zero. Here the exponents $\frac{mp}{p-m+1}$ and $\frac{m-1}{p-m+1}$ are also sharp, and cannot be relaxed.

Besides the above mentioned literature, many analogous results using volume growth to derive the Liouville's theorems are obtained when studying more general differential inequalities, see \cite{MMP15, Sun-jmaa, Sun15, Wang-Xiao, XWS, Xu19}.
For example, by using the perturbation of fundamental solution to
$\Delta_m$, Wang-Xiao presented a constructive approach to obtain the quantitative positive solution so that they can show the sharpness
of the volume growth (cf. \cite{Wang-Xiao}).
Mastrolia-Monticelli-Punzo investigated the quasilinear differential inequality with potential term, and they showed that the potential function
gives a direct influence on the volume growth, namely, if the potential decays fast enough at infinity,
then the sharpness of critical exponent for the log-term will expire (cf. \cite{MMP15}).

Recently in this direction, volume growth condition is also used to derive the nonexistence and existence of global positive solution to parabolic equation on manifolds, see
\cite{GSXX, MMP17, SX20}.

Noting in the above mentioned literature, there is only volume condition without imposing any other assumptions on manifolds. Now for our
convenience, let us introduce the following additional restrictions on manifolds:
\begin{enumerate}
\item[(VD)]{The volume doubling condition: for all $x\in M$ and $r>0$
\begin{align*}
\mu(B(x, 2r))\lesssim \mu(B(x,r)).
\end{align*} }
\item[(PI)]{The Poincar\'{e} inequality: for any ball $B(x,r)$ and $f\in C^2(B(x,r))$
\begin{align*}
\int_{B(x,r)}|f-f_B|^2d\mu \lesssim r^2\int_{B(x,r)}|\nabla f|^2d\mu,
\end{align*}
where $f_B$ stands for the mean value of $f$ on $B(x,r)$.}
\end{enumerate}

Recently in the paper \cite{Sun20}, Grigor'yan, Verbitsky and the first author proved that on the manifold where the above mentioned conditions $\text{(VD)}$ and $\text{(PI)}$ are both satisfied, then (\ref{equ-sem}) possesses a $C^2$ positive solution
if and only if
\begin{align}\label{vol-int-1}
\int^{\infty}\frac{r^{2p-1}}{V(r)^{p-1}}dr<\infty,
\end{align}
or equivalently
\begin{align}\label{vol-int-2}
\int^{\infty}\left[\int_r^{\infty}\frac{tdt}{V(t)}\right]^{p-1}rdr<\infty,
\end{align}
holds. The equivalence of volume conditions (\ref{vol-int-1}) and (\ref{vol-int-2})
can be derived by applying the Hardy's type inequality.
Though (\ref{vol-int-1}) and (\ref{vol-int-2}) are more delicate, and
can be considered as an integral version of (\ref{vol-sem}) (by replacing ``$<$" with ``=" in (\ref{vol-int-1}) and (\ref{vol-int-2})). However, besides (\ref{vol-int-1}) and (\ref{vol-int-2}),
more additional assumptions
on manifold are needed. Hence it is very natural to ask whether it is possible to remove the assumptions of
$\text{(VD)}$ and $\text{(PI)}$ to verify the following conjecture.
\begin{conjecture}
\rm{Let $M$ be a noncompact geodesically complete manifold.
If either
(\ref{vol-int-1}) or (\ref{vol-int-2}) is satisfied on $M$, then any nonnegative solution to (\ref{equ-sem}) is identical
zero.}
\end{conjecture}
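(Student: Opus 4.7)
The plan is to argue by contradiction, refining the radial test-function technique developed in \cite{Grigoryan13} and \cite{Sun15} so that the polynomial-with-logarithm cutoff is replaced by a profile built from the weight $\int_{r}^{\infty}t/V(t)\,dt$ that appears in (\ref{vol-int-2}). Suppose $u\geq 0$ is a nontrivial solution of $\Delta u+u^{p}\leq 0$; since $u$ is then superharmonic, the strong minimum principle forces $u>0$ on all of $M$. Multiplying the differential inequality by a test function of the form $(u+\varepsilon)^{-\beta}\varphi^{\frac{2p}{p-1}}$ with $\beta\in(0,1)$ and a Lipschitz cutoff $\varphi$ of compact support, absorbing the gradient cross-term by Young's inequality and letting $\varepsilon\downarrow 0$, one should obtain the Caccioppoli-type inequality
\begin{equation*}
\int_{M}u^{p}\varphi^{\frac{2p}{p-1}}\,d\mu\leq C\int_{M}\lvert\nabla\varphi\rvert^{\frac{2p}{p-1}}\,d\mu,
\end{equation*}
which is precisely the inequality underlying the proofs of \cite{Grigoryan13} and of Theorem \ref{thm-non}(I) specialised to $q=0$, $m=2$.

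The decisive step is to choose $\varphi=\varphi_{R_{0},R}$ radial, equal to $1$ on $B_{R_{0}}$, vanishing outside $B_{R}$, and selected to minimise the right-hand side. Using the coarea formula with respect to the distance function, this reduces to a one-dimensional variational problem for a weighted $\frac{2p}{p-1}$-energy against the Stieltjes measure $dV(r)$; the Euler--Lagrange equation yields a profile $\psi$ whose minimal energy equals, up to constants, a negative power of the truncated integrals in (\ref{vol-int-1}) and (\ref{vol-int-2}), the equivalence of the two expressions being a Hardy-type rearrangement as in \cite{Sun20}. Under the (intended) hypothesis that one of these integrals diverges as $R\to\infty$, the minimal energy tends to $0$, so the Caccioppoli inequality collapses to $\int_{B_{R_{0}}}u^{p}\,d\mu=0$ for every fixed $R_{0}$, forcing $u\equiv 0$ and contradicting nontriviality.

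The hardest part will be handling the absence of (VD) and (PI). Without (VD) the pushforward of $\mu$ by the distance function need not be absolutely continuous with respect to $dr$, and $V$ itself may have flat pieces or jumps, so the variational computation above is only formal and must be carried out directly against the Stieltjes measure $dV$ or through a smoothed approximation $V_{\varepsilon}$ whose error one controls uniformly; moreover, the Hardy-type rearrangement linking (\ref{vol-int-1}) and (\ref{vol-int-2}) in \cite{Sun20} appears to require some regularity of $V$, so the two conditions in the conjecture may need separate treatments, with (\ref{vol-int-2}) being the more natural companion to the cutoff proposed above. A second obstacle is that in the absence of a local Harnack or reverse-H\"{o}lder inequality, the intermediate inequality $\int u^{p-\beta}\varphi^{\lambda}\leq C\int u^{1-\beta}\lvert\nabla\varphi\rvert^{\lambda}$ cannot be decoupled by a crude $\sup_{B_{R}}u$ bound on $u$; one must iterate through $\beta\uparrow 1$ in a two-parameter scheme along the level sets of $\varphi_{R_{0},R}$, in the spirit of \cite{Sun15}, but now keyed to the integral hypothesis rather than to polynomial volume growth.
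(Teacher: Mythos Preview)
The statement you are attempting is labelled a \emph{conjecture} in the paper and is left open; the authors give no proof, so there is nothing to compare your argument against. What you have written is a programme rather than a proof, and at least one of the acknowledged obstacles is a genuine gap that your outline does not close.

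Your plan hinges on choosing a radial cutoff $\varphi=\psi(r(\cdot))$ so as to minimise $\int_{M}|\nabla\varphi|^{2p/(p-1)}\,d\mu$. By the coarea formula this equals $\int_{0}^{\infty}|\psi'(r)|^{2p/(p-1)}S(r)\,dr$ with $S(r)=V'(r)$ for a.e.\ $r$, and the optimal profile on $[R_{0},R]$ has energy $\bigl(\int_{R_{0}}^{R}S(r)^{-(p-1)/(p+1)}\,dr\bigr)^{-(p+1)/(p-1)}$. The hypothesis you would actually need is therefore the divergence of $\int^{\infty}S(r)^{-(p-1)/(p+1)}\,dr$, a condition on the derivative $S=V'$ rather than on $V$. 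Under volume doubling one has $S(r)\asymp V(r)/r$ and this condition becomes equivalent to the divergence of the integral in (\ref{vol-int-1}); but the whole point of the conjecture is to dispense with (VD), and without it $S$ and $V/r$ can behave very differently (for instance $V$ can be nearly constant on long intervals where $S$ is tiny, then grow rapidly elsewhere), so the $S$-based and $V$-based conditions decouple. Your proposed iteration through $\beta\uparrow 1$ does not resolve this mismatch, since every step of that scheme still produces an integral of a power of $|\nabla\varphi|$, hence still an $S$-based quantity after the coarea formula. Incidentally, the coarea formula for the Lipschitz distance function already makes the pushforward of $\mu$ by $r$ absolutely continuous, so the issue is not absolute continuity of $dV$ but the lack of any comparison between $V'$ and $V/r$.

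A smaller point: as you noticed, the conjecture as literally printed asserts nonexistence under the \emph{finiteness} conditions (\ref{vol-int-1})--(\ref{vol-int-2}), whereas the surrounding discussion (``by replacing `$<$' with `$=$' '') makes clear that the intended hypothesis is their divergence. Your reading of the intended statement is correct, but this should be flagged explicitly rather than parenthetically.
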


Besides these connections with existence and nonexistence of solution to partial differential equation and inequalities, volume
condition
can be also used to obtain the information on the
stochastic completeness of manifold, and heat kernel of $\Delta$, see \cite{BCG, CG, Grigoryan99, Grigoryan20}. Recall that $M$ is stochastic complete, if the life time of Brownian motion on $M$
is almost equal to $\infty$ almost surely. It is well known that if
\begin{align}\label{vol-sto}
\int^\infty\frac{rdr}{\ln V(r)}=\infty.
\end{align}
then $M$ is stochastic complete.
Condition (\ref{vol-sto}) is also sharp but not necessary for the stochastic completeness of the manifold (cf. \cite[Propostion 3.2]{Grigoryan99}).
Another well known result is that if $V(r)\lesssim r^{\alpha}$ hold for all large enough $r$, then the heat kernel $p_t(o, o)\gtrsim\frac{1}{(t\ln t)^{\frac{\alpha}{2}}}$ holds for large enough $t$, see \cite{CG}.

Apart from the motivation of volume growth, we are also inspired by the literature dealing with (\ref{equ}) in Euclidean space. In \cite{MP01},
Mitidieri and Pohozaev proved that when $M=\mathbb{R}^n$, $p>0, q\geq0$,
and $p+q>m-1$, $n>m$. If
\begin{align}\label{MP-pq}
p(n-m)+q(n-1)\leq n(m-1),
\end{align}
then (\ref{equ})
admits no positive solution except constants. Noting that (\ref{MP-pq}) is equivalent to
$$n\leq \frac{mp+q}{p+q-m+1},$$
which is also recovered by Theorem \ref{thm-non} \textbf{(I)}.

Later, many authors generalized Mitidieri-Pohozaev's results to more general differential inequalities, including
coercive and anticoercive elliptic inequalities or system, both of the $m\text{-}$laplacian and of the mean curvature type.
Since there are huge numbers of the papers in this area and its generalization, we apologize we cannot list all the bibliography exhaustively here, please see
\cite{BGV, Filip09, Filip11, Lions85, MP99, MP01, SZ02} and references therein.

Though there are many papers devoted to study the elliptic differential equations or differential inequalities of type (\ref{equ}) in bounded domain and unbounded domain, but very surprisingly, there are few papers to deal with the problem of (\ref{equ}) and its equation version with negative $(p,q)$. Among of these literature, $p$ is usually allowed to be negative usually under some restriction, but $q$ is always nonnegative, see \cite{CC15, CR20, FQS, Filip09, Filip11, FPS20,HB15, LiLi12}.
%When $m=2$, $q=2$, by the change of, one can use the Liouville result on harmonic function to
%derive that $u$ is necessarily constant.

As far as we know, even in the
Euclidean settings, probably this is the first paper
to deal with case when $p, q$ are both allowed to be negative.

Recently in \cite{BGV}, Bidaut-V\'{e}ron, Garc\'{i}a-Huidobro, and V\'{e}ron studied local and global properties of positive solutions
to
\begin{equation}\label{eq-eq}
\Delta u+u^p|\nabla u|^q=0,
\end{equation}
in a domain $\Omega\subset\mathbb{R}^n$, and espcecially, they proved that
if $p, q\geq0$ and
\begin{equation*}
p(n-2)+q(n-1)<n,
\end{equation*}
then (\ref{eq-eq}) admits no positive solution except constants. Especially when $\Omega=\mathbb{R}^n$, there exists nonradial positive
solutions of (\ref{eq-eq}) if and only if $p\geq0, 0\leq q<1$ and
\begin{equation*}
p(n-2)+q(n-1)\geq n+\frac{2-q}{1-q}.
\end{equation*}

In \cite{FPS20}, Filippucci, Pucci and Souplet restudied (\ref{eq-eq}) in $\mathbb{R}^n$, they generalized \cite{BGV}'s result to the case of
$q>2, p+q>1$(noting here $p$ can be a little ``negative"), and obtained that the only positive bounded classical solution to (\ref{eq-eq}) is constant.

Motivated by above literature, we also show that our volume growth conditions
in Theorem \ref{thm-non} are sharp for in the most cases, roughly speaking, if the volume conditions in Theorem \ref{thm-non} are relaxed, then there exist manifolds which admits a nontrivial positive solution to (\ref{equ}).
\begin{theorem}\label{thm-ex}
\rm{For any given $\epsilon>0$.
\begin{enumerate}
\item[\textbf{(A)}]{ Assume $(p, q) \in G_1$. Then
there exists a noncompact geodesically complete manifold $M$ satisfying
\begin{align}\label{vol-1-1}
V(r) \lesssim r^{\frac{mp+q}{p+q-m+1}} \left( \ln r \right)^{\frac{m-1}{p+q-m+1}+\epsilon},\quad\mbox{for all large enough $r$},
\end{align}
such that (\ref{equ}) possesses a nontrivial positive solution.}

\item[\textbf{(B)}] {Assume $(p, q) \in G_2$.
Then there exists a noncompact geodesically complete manifold $M$ satisfying
\begin{align}\label{vol-2-1}
V(r) \lesssim r^{m} \left( \ln r \right)^{m-1+\epsilon},\quad\mbox{for all large enough $r$},
\end{align}
such that (\ref{equ}) possesses a nontrivial positive solution.}

\item[\textbf{(C)}]{ Assume $(p, q) \in G_3$. Then
there exists a noncompact geodesically complete manifold $M$ satisfying
\begin{align}\label{vol-3-1}
V(r) \lesssim r^{\frac{q}{q-m+1}} \left( \ln r \right)^{\frac{1}{q-m+1} +\epsilon},\quad\mbox{for all large enough $r$},
\end{align}
such that (\ref{equ}) possesses a nontrivial positive solution.}

\item[\textbf{(D)}]{Assume $(p,q) \in G_4$. Given $\lambda>0$,
then there exist a noncompact geodesically complete manifold $M$ satisfying
\begin{equation}\label{vol-4-1}
V(r)\lesssim e^{\lambda r}, \quad\mbox{for all large enough $r$},
\end{equation}
such that (\ref{equ}) possesses a
nontrivial positive solution.}

\item[\textbf{(E)}] {Assume $(p,q) \in G_5$. Then there exist some positive constant $\iota$ and a noncompact geodesically complete manifold $M$ satisfying
\begin{equation}\label{vol-5-1}
V(r)\lesssim e^{\iota r}, \quad\mbox{for all large enough $r$},
\end{equation}
such that (\ref{equ}) possesses a
nontrivial positive solution.}

\item[\textbf{(F)}] {Assume $(p,q) \in G_6$. Given $\lambda>0$, then there exist some positive constant $\iota>2(m-1-q)+1$ and a noncompact geodesically complete manifold $M$ satisfying
\begin{equation}\label{vol-6-1}
V(r)\lesssim e^{\lambda r^{\iota}\ln r}, \quad\mbox{for all large enough $r$},
\end{equation}
such that (\ref{equ}) possesses a
nontrivial positive solution.}
\end{enumerate}}
\end{theorem}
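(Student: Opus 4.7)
The plan is to prove each of the six sharpness statements by constructing a spherically symmetric model manifold on which an explicit nontrivial positive solution to $\Delta_m u + u^p|\nabla u|^q = 0$ (in fact equality, not merely inequality) can be written down. Take $M = [0, \infty) \times \mathbb{S}^{n-1}$ for some $n \geq 2$, endowed with the metric $dr^2 + \psi(r)^2 g_{\mathbb{S}^{n-1}}$ where $\psi$ is smooth, positive for $r > 0$, and satisfies $\psi(r) = r$ near $r = 0$ so that $M$ is a complete Riemannian manifold. Then $V(r) = \omega_{n-1} \int_0^r \psi(s)^{n-1}\,ds$ and any radial $u = u(r)$ satisfies
\begin{equation*}
\Delta_m u = \psi(r)^{-(n-1)}\bigl(\psi(r)^{n-1} |u'|^{m-2} u'\bigr)'.
\end{equation*}
So the PDE reduces to a first-order ODE that can be solved for $\psi$ once an ansatz for $u$ is fixed.

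For each region I would choose the ansatz for $u$ so that its decay rate matches the critical scaling coming from Theorem \ref{thm-non}. Concretely, for $(p,q)\in G_1$ I would try $u(r)= (c + r)^{-a}$ with $a = \frac{m-q}{p+q-m+1}$ (the exponent dictated by balancing $(|u'|^{m-2}u')'$ against $u^p|u'|^q$), and analogously $u(r) = (c+r)^{-a}$ with $a = \frac{m-q}{q-m+1}$ in $G_3$; for $G_2$ take $u$ a suitable negative power of $\ln r$; for $G_4$, $G_5$, $G_6$, where no polynomial balance is available, I would use decaying ansätze such as $u(r) = (1 + \ln r)^{-\gamma}$ or $u(r) = \exp(-f(r))$ with $f$ chosen to make the terms balance. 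Given such a $u$, rewrite the equation as
\begin{equation*}
(n-1)\frac{\psi'}{\psi} = \frac{u^p |u'|^q}{|u'|^{m-2}|u'|} - \frac{(|u'|^{m-2}u')'}{|u'|^{m-2}u'},
\end{equation*}
integrate to obtain $\psi(r) = \psi(r_0)\exp\bigl(\int_{r_0}^{r}(\dots)\bigr)$, and then estimate $V(r)$. By design, each choice will produce volume growth matching the target bound with the additional $\epsilon$ slack absorbed into the exponent (cases (A)--(C)) or the constant in the exponential (cases (D)--(F)). Any freedom in the ansatz (e.g.\ multiplicative constants, the parameter $\lambda$, the dimension $n$) can be used to absorb $\epsilon$, $\lambda$, or the required lower bound on $\iota$.

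After the asymptotic $\psi$ is chosen, glue it to $\psi(r) = r$ near the origin through a smooth positive interpolation on some compact interval, so that $M$ is a genuine complete Riemannian manifold; the corresponding $u$ is also modified on this compact set to stay smooth and positive, which only perturbs the equation on a compact set and hence can be compensated by adjusting the constants. The fact that the resulting $u$ is non-constant is automatic from its explicit decay at infinity.

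The main obstacle is the case-by-case analysis in the degenerate regions $G_4$, $G_5$, $G_6$, where $p + q \leq m - 1$ and no polynomial power scaling is available: here the ODE for $\psi$ forces $\psi$ to grow at least exponentially, and the precise rate is sensitive to whether $p + q = m-1$ (which produces volumes $\lesssim e^{\iota r}$ after a correct balance of exponential decay rates in $u$), or $p + q < m-1$ with $q = m-1$ (cases in $G_4$, where any $\lambda > 0$ must be admissible), or $p + q < m-1$ with $q < m-1$ (case $G_6$, where one expects $V(r) \lesssim e^{\lambda r^\iota \ln r}$ with $\iota > 2(m-1-q)+1$). In these cases the right ansatz is a double-exponential-type decay of $u$, and verifying positivity, smoothness, and the sharp constant in the exponent of $V$ requires delicate asymptotic matching. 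The polynomial regime cases (A)--(C) are by comparison routine once the scaling exponent is identified, and (B) reduces to the classical $m$-harmonic model.
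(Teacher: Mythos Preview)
Your overall strategy (model manifolds, radial $u$) matches the paper's, but there is a genuine gap in cases (A)--(C). With the pure power ansatz $u(r)=C(c+r)^{-a}$, $a=\frac{m-q}{p+q-m+1}$, the reverse-engineered warping satisfies $(n-1)\psi'/\psi \sim K r^{-1}$ with $K=(m-1)(a+1)+C^{p+q-m+1}a^{q-m+1}$; the first summand already equals $\frac{mp+q}{p+q-m+1}-1$, and the second is \emph{strictly positive} for every choice of $C>0$. Hence your manifold has $V(r)\asymp r^{\frac{mp+q}{p+q-m+1}+\delta}$ for some $\delta>0$, which is \emph{not} bounded by $r^{\frac{mp+q}{p+q-m+1}}(\ln r)^{\frac{m-1}{p+q-m+1}+\epsilon}$ and so fails (\ref{vol-1-1}). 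The $\epsilon$ cannot be absorbed into $C$ or $n$: to hit the logarithmic window you must build a log correction into $u$ itself. The same obstruction applies verbatim in (C). The paper avoids this by taking the dual route: it \emph{prescribes} the manifold with $S(r)\sim r^{\alpha-1}(\ln r)^\beta$ for $\beta$ just above the critical log exponent, then writes down a $u$ (carrying the matching log factor, e.g.\ $u_\eta(r)=\int_r^\infty S(s)^{-1/(m-1)}(\ln s)^{\eta/(m-1)}ds$) that solves the inequality for large $r$.

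Your treatment of the origin is also too casual. You cannot ``modify $u$ on a compact set and adjust constants'': in $G_1$ one may have $q<0$, and any smooth radial extension forces $|\nabla u|(o)=0$, so $u^p|\nabla u|^q$ blows up and the inequality fails pointwise near $o$. The paper handles this by constructing an explicit near-origin solution (with gradient vanishing at a controlled rate so that $u^p|\nabla u|^q\in L^1_{\mathrm{loc}}$) to an inequality with an auxiliary constant $\lambda$, $C^1$-matching it to the far-field solution at a single radius, and then rescaling $u$ to eliminate $\lambda$. For the exponential-growth cases (D)--(F) your sketch is closer in spirit to the paper's, but there too the paper fixes $S(r)$ first and exhibits the far-field $u$ explicitly (e.g.\ $u=e^{-\eta r}$ in (E), $u=e^{(\ln r)/r}$ in (F)) rather than inverting for $\psi$.
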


\begin{remark}
\rm{ We have the following comments
\begin{enumerate}
\item[1.]{In Theorem \ref{thm-ex} \textbf{(D)}, we can see that when $(p, q)\in G_4$, if the volume has exponential growth which
means that (\ref{vol-4}) fails, then the Liouville's result may not work.}

\item[2.]{In Theorem \ref{thm-ex} \textbf{(E)}, we do not show how small of $\iota$ in (\ref{vol-5-1}) for general manifold to suffice the existence of
a nontrivial positive solution to
(\ref{equ}). But in the proof of Theorem \ref{thm-ex}, we show that for some special model manifold, if $\iota$ is big enough (see (\ref{5-3-iota}) and (\ref{5-iota})), then (\ref{equ}) admits a nontrivial positive solution. }

\item[3.]{In Theorem \ref{thm-ex} \textbf{(F)}, we do not know whether $2(m-1-q)+1$ is sharp for $\iota$, but it also shows that
the volume growth $e^{\kappa r\ln r}$ in (\ref{vol-6}) is sharp in profile, which can not been replaced by $e^{\kappa r^{1+\epsilon}\ln r}$ for any $\epsilon>0$.}
\end{enumerate}}
\end{remark}

\emph{\textbf{Notations.} In the above and below, the letters $C,C^{\prime },C_{0},C_{1}, c_0, c_1...$ denote positive
constants whose values are unimportant and may vary at different occurrences.
$U\lesssim V$ stands for $U\le c V$ for a constant $c>0$; $U\asymp V$ means both $U\lesssim V$ and $V\lesssim U$.}

\section{Nonexistence}\label{sec-non}

Denote by $%
W_{loc}^{1,m }\left( M\right) $ the space of functions $f\in L_{loc}^{m}\left(
M\right) $ whose weak gradient $\nabla f$ is also in $L_{loc}^{m}\left(
M\right) .$ Denote by $W_{c}^{1,m}\left( M\right) $ the subspace of $%
W_{loc}^{1,m }\left( M\right) $ of functions with compact support.

Solutions of (\ref{equ}) are understood in a weak sense.
\begin{definition}
$u$ is called a weak positive solution to (\ref{equ}), if $u$ is a positive function from $W_{loc}^{1,m}(M)$, and for any nonnegative function $ \psi\in W_{c}^{1,m}(M)$, the following holds:
\begin{equation}\label{sol-def}
-\int_{M}|\nabla u|^{m-2}(\nabla u, \nabla \psi)d\mu+\int_{M}u^p|\nabla u|^q\psi d\mu\leq0.
\end{equation}
\end{definition}

\begin{lemma}\label{lem1}
\rm{Assume $p+q\neq m-1$. If $u$ is a nontrivial positive solution to (\ref{equ}), then there exists a positive pair $(a, b)$ such that
for any $0\leq\varphi\leq1$ with $\varphi\in W_{c}^{1,m}(M)$, the following estimates hold:
\begin{align}\label{2-5}
\int_M u^{p-a} |\nabla u|^q \varphi^b d\mu
&\lesssim  (2b)^{\frac{mp+q+a(q-m)}{p+q-a}}a^{-\frac{p(m-1)+a(q-m+1)}{p+q-a}} \left( \int_{\text{supp}|\nabla\varphi|} u^{p-a} |\nabla u|^q \varphi^b d\mu \right)^{\frac{m-1-a}{p+q-a}} \notag\\
&\quad \times \left( \int_{M} |\nabla \varphi|^{\frac{mp+q+a(q-m)}{p+q-m+1}} d\mu \right)^{\frac{p+q-m+1}{p+q-a}},
\end{align}
and
\begin{align}\label{est-1}
\int_M u^{p-a} |\nabla u|^q \varphi^b d\mu
\lesssim (2b)^{\frac{mp+q+a(q-m)}{p+q-m+1}} a^{-\frac{p(m-1)+a(q-m+1)}{p+q-m+1}} \int_{M} |\nabla \varphi|^{\frac{mp+q+a(q-m)}{p+q-m+1}} d\mu,
\end{align}
where $a, b$ satisfy
\begin{equation}\label{cond-ab}
\left\{
\begin{array}{ll}
\frac{mp+q+a(q-m)}{(m-1)p+a(q-m+1)}>1, \\
\frac{p+q-a}{m-1-a}>1,\\
%b>\frac{mp+q+a(q-m)}{m-1-a},\\
b>\frac{mp+q+a(q-m)}{p+q-m+1}.
\end{array}
\right.
\end{equation}
}
\end{lemma}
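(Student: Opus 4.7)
The natural approach is to substitute the test function $\psi = u^{-a}\varphi^b$ into the weak formulation (\ref{sol-def}); since $\varphi$ has compact support, so does $\psi$ (formally). A direct computation gives $\nabla\psi = -a u^{-a-1}\varphi^b\nabla u + b u^{-a}\varphi^{b-1}\nabla\varphi$, and combining this with the pointwise bound $|\nabla u|^{m-2}(\nabla u,\nabla\varphi)\leq |\nabla u|^{m-1}|\nabla\varphi|$ rearranges the inequality into
\begin{equation*}
I_1 + a\,I_2 \;\leq\; b\int_{M} u^{-a}|\nabla u|^{m-1}\varphi^{b-1}|\nabla\varphi|\,d\mu,
\end{equation*}
where $I_1 := \int_M u^{p-a}|\nabla u|^q\varphi^b\,d\mu$ and $I_2 := \int_M u^{-a-1}|\nabla u|^m\varphi^b\,d\mu$. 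Crucially, the right-hand integrand is supported in $\text{supp}|\nabla\varphi|$; this is the source of the localized factor in (\ref{2-5}).

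The heart of the argument is a three-factor H\"older decomposition of the RHS integrand. I would split pointwise
\begin{equation*}
u^{-a}|\nabla u|^{m-1}\varphi^{b-1}|\nabla\varphi| \;=\; \bigl(u^{p-a}|\nabla u|^q\varphi^b\bigr)^{\alpha_1}\bigl(u^{-a-1}|\nabla u|^m\varphi^b\bigr)^{\alpha_2}\bigl(|\nabla\varphi|^N\varphi^{b-N}\bigr)^{\alpha_3},
\end{equation*}
and match the exponents of $u$, $|\nabla u|$, $\varphi$, and $|\nabla\varphi|$. Under the standing hypothesis $p+q\neq m-1$, this linear system has the unique solution
\begin{equation*}
\alpha_1 = \tfrac{m-1-a}{mp+q+a(q-m)},\quad \alpha_2 = \tfrac{(m-1)p+a(q-m+1)}{mp+q+a(q-m)},\quad \alpha_3 = \tfrac{p+q-m+1}{mp+q+a(q-m)},
\end{equation*}
with $N = \tfrac{mp+q+a(q-m)}{p+q-m+1}$; one checks $\alpha_1+\alpha_2+\alpha_3 = 1$. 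The three inequalities in (\ref{cond-ab}) translate precisely to the positivity of $\alpha_1,\alpha_2,\alpha_3$ together with $b>N$, the last of which allows one to drop $\varphi^{b-N}\leq 1$. Applying H\"older then bounds the RHS by $b\,I_{1,\text{supp}|\nabla\varphi|}^{\alpha_1}\,I_2^{\alpha_2}\,I_3^{\alpha_3}$, where $I_3 := \int_M|\nabla\varphi|^N d\mu$.

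A weighted Young's inequality $b\,I_2^{\alpha_2}X \leq \tfrac{a}{2}I_2 + C\,a^{-\alpha_2/(1-\alpha_2)}(2b)^{1/(1-\alpha_2)}X^{1/(1-\alpha_2)}$, applied with $X = I_{1,\text{supp}|\nabla\varphi|}^{\alpha_1}I_3^{\alpha_3}$, absorbs the $I_2$-term into the LHS. Since $1-\alpha_2 = \alpha_1+\alpha_3 = \tfrac{p+q-a}{mp+q+a(q-m)}$, the resulting exponents come out to $\alpha_1/(1-\alpha_2) = \tfrac{m-1-a}{p+q-a}$, $\alpha_3/(1-\alpha_2) = \tfrac{p+q-m+1}{p+q-a}$, $1/(1-\alpha_2) = \tfrac{mp+q+a(q-m)}{p+q-a}$, and $\alpha_2/(1-\alpha_2) = \tfrac{(m-1)p+a(q-m+1)}{p+q-a}$, matching (\ref{2-5}) exactly. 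Estimate (\ref{est-1}) then follows from (\ref{2-5}) by the coarse bound $I_{1,\text{supp}|\nabla\varphi|}\leq I_1$ and a self-improving step: raising both sides of the resulting inequality to the power $\tfrac{p+q-a}{p+q-m+1}$ produces (\ref{est-1}) with the stated constants.

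The main technical obstacle is legitimizing $\psi = u^{-a}\varphi^b$ as a test function in $W_c^{1,m}(M)$, since $u\in W^{1,m}_{\mathrm{loc}}(M)$ is only known to be positive --- a priori $u$ could approach $0$ or $\infty$ on $\mathrm{supp}\,\varphi$, and there is no reason $I_1$ or $I_2$ should be finite to begin with. I would handle this by the standard device of working with the regularized test function $\psi_{\varepsilon,K} = (\min(u,K)+\varepsilon)^{-a}\varphi^b$ (for which the analogues of $I_1,I_2$ are automatically finite), carrying out the argument above to obtain regularized versions of (\ref{2-5})--(\ref{est-1}), and then passing to the limits $\varepsilon\to 0^+$ and $K\to\infty$ by monotone/dominated convergence to recover the estimates in full generality.
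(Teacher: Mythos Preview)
Your argument is correct and arrives at the same estimates as the paper, but you reverse the order of the two key inequalities. The paper first applies Young's inequality with conjugate pair $(s,t)=\bigl(\tfrac{mp+q+a(q-m)}{(m-1)p+a(q-m+1)},\tfrac{mp+q+a(q-m)}{p+q-a}\bigr)$ to the right-hand side, immediately splitting off $\tfrac{a}{2}I_2$ and leaving a single integral $\int u^{-a+t-1}|\nabla u|^{m-t}\varphi^{b-t}|\nabla\varphi|^t\,d\mu$; it then applies a two-factor H\"older with pair $(\gamma,\rho)=\bigl(\tfrac{p+q-a}{m-1-a},\tfrac{p+q-a}{p+q-m+1}\bigr)$ to this remaining integral. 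You instead do one three-factor H\"older at the outset and absorb $I_2$ afterwards by Young. The two routes are algebraically equivalent: your $(\alpha_1,\alpha_2,\alpha_3)$ satisfy $\alpha_2=1/s$, $\alpha_1/(1-\alpha_2)=1/\gamma$, $\alpha_3/(1-\alpha_2)=1/\rho$, so positivity of the $\alpha_i$ is exactly the condition $s,\gamma>1$ recorded in (\ref{cond-ab}). Your approach is arguably cleaner since the three-way split makes the exponent bookkeeping more transparent, while the paper's Young-then-H\"older order has the minor advantage that one never forms $I_2^{\alpha_2}$ as a separate factor. On the regularization issue, the paper is brisker than you: it only replaces $u$ by $u+\epsilon$ (no upper truncation), remarking that finiteness of $I_1$ follows from compact support of $\varphi$; your $\min(u,K)+\epsilon$ is more careful but not strictly needed here.
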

\begin{proof}
Take  $\varphi\in W_{c}^{1,m}(M)$ such that $0\leq\varphi\leq1$.
Without loss of generality, let us assume that $u^{-1}\in L_{loc}^{\infty}(M)$, otherwise we replace $u$ by $u+\epsilon$ for any $\epsilon>0$, and
at last let $\epsilon$ goes to zero.
Substituting $\psi=u^{-a}\varphi^b$ in (\ref{sol-def}), we obtain
\begin{align}\label{2-1}
\int_M u^{p-a} |\nabla u|^q \varphi^b d\mu
+a \int_M u^{-a-1} |\nabla u|^m \varphi^b d\mu
\leq b\int_M u^{-a} \varphi^{b-1} |\nabla u|^{m-2}(\nabla u, \nabla \varphi) d\mu,
\end{align}
where $a, b>0$ are to be chosen later.

Let
\begin{align}\label{def-st}
s=\frac{mp+q+a(q-m)}{(m-1)p+a(q-m+1)}, \quad
t=\frac{mp+q+a(q-m)}{p+q-a}.
\end{align}
By choosing $a$ to let $s, t>1$, and applying the following Young's inequality
$$XY\leq X^s+Y^t,\quad\mbox{for $X, Y>0$},$$
we obtain
\begin{align}\label{2-2}
&b \int_M u^{-a} \varphi^{b-1}|\nabla u|^{m-2} (\nabla u, \nabla \varphi) d\mu \nonumber\\
&\leq
\int_M
\left[\left(\frac{a}{2}\right)^{\frac{1}{s}} u^{-\frac{a+1}{s}} |\nabla u|^{\frac{m}{s}} \varphi^{\frac{b}{s}} \right]
\left[ b\left(\frac{a}{2}\right)^{-\frac{1}{s}} u^{-a+\frac{a+1}{s}} |\nabla u|^{m-1-\frac{m}{s}} \varphi^{b-1-\frac{b}{s}} |\nabla \varphi| \right] d\mu \notag\\
&\leq \frac{a}{2} \int_M u^{-a-1} |\nabla u|^m \varphi^b d\mu \notag\\
&\quad+ b^{t}2^{\frac{t}{s}} a^{-\frac{t}{s}} \int_M u^{-a t+\frac{t}{s}(a+1)} |\nabla u|^{t(m-1)-\frac{mt}{s}} \varphi^{t(b-1)-\frac{bt}{s} } |\nabla \varphi|^t d\mu \notag\\
&= \frac{a}{2} \int_M u^{-a-1} |\nabla u|^m \varphi^b d\mu + b^{t}2^{t-1} a^{1-t} \int_M u^{-a+t-1} |\nabla u|^{m-t} \varphi^{b-t} |\nabla \varphi|^t d\mu.
\end{align}
Substituting (\ref{2-2}) into (\ref{2-1}), we have
\begin{align*}
&\int_M u^{p-a} |\nabla u|^q \varphi^b d\mu
+ \frac{a}{2} \int_M u^{-a-1} |\nabla u|^m \varphi^b d\mu \notag\\
&\lesssim (2b)^ta^{1-t} \int_M u^{-a+t-1} |\nabla u|^{m-t} \varphi^{b-t} |\nabla \varphi|^t d\mu.
\end{align*}
Hence
\begin{align}\label{2-3}
&\int_M u^{p-a} |\nabla u|^q \varphi^b d\mu \lesssim (2b)^ta^{1-t} \int_M u^{-a+t-1} |\nabla u|^{m-t} \varphi^{b-t} |\nabla \varphi|^t d\mu.
\end{align}
Define
\begin{align}\label{def-kl}
\gamma = \frac{p+q-a}{m-1-a}, \quad
\rho = \frac{p+q-a}{p+q-m+1}.
\end{align}
Letting $a$ be chosen further such that $\gamma, \rho>1$, and applying H\"older's inequality to the second integral of (\ref{2-3}), we obtain
\begin{align*}
& \int_M u^{-a+t-1} |\nabla u|^{m-t} \varphi^{b-t} |\nabla \varphi|^t d\mu \notag\\
&= \int_M \left( u^{-a+t-1} |\nabla u|^{m-t} \varphi^{\frac{b}{\gamma}} \right)
\left( \varphi^{b-t-\frac{b}{\gamma}} |\nabla \varphi|^t \right) d\mu \notag\\
&\leq
\left( \int_{\text{supp}|\nabla\varphi|} u^{p-a} |\nabla u|^q \varphi^b d\mu \right)^{\frac{1}{\gamma}}
\left( \int_{M} \varphi^{b-t\rho} |\nabla \varphi|^{t\rho} d\mu \right)^{\frac{1}{\rho}}.
\end{align*}
Letting $b$ be chosen large enough such that $b\geq t\rho$, and
noting $0\leq \varphi \leq 1$, we derive
\begin{align}\label{2-4}
& \int_M u^{-a+t-1} |\nabla u|^{m-t} \varphi^{b-t} |\nabla \varphi|^t d\mu \notag\\
&\leq
\left( \int_{\text{supp}|\nabla\varphi|} u^{p-a} |\nabla u|^q \varphi^b d\mu \right)^{\frac{1}{\gamma}}
\left( \int_{M} |\nabla \varphi|^{t\rho} d\mu \right)^{\frac{1}{\rho}}.
\end{align}
Combining (\ref{2-4}) with (\ref{2-3}), we obtain
\begin{align*}
\int_M u^{p-a} |\nabla u|^q \varphi^b d\mu
\lesssim (2b)^ta^{1-t} \left( \int_{\text{supp}|\nabla\varphi|} u^{p-a} |\nabla u|^q \varphi^b d\mu \right)^{\frac{1}{\gamma}}
\left( \int_{M} |\nabla \varphi|^{t\rho} d\mu \right)^{\frac{1}{\rho}},
\end{align*}
namely,
\begin{align*}
\int_M u^{p-a} |\nabla u|^q \varphi^b d\mu
&\lesssim (2b)^{\frac{mp+q+a(q-m)}{p+q-a}}a^{-\frac{p(m-1)+a(q-m+1)}{p+q-a}} \left( \int_{\text{supp}|\nabla\varphi|} u^{p-a} |\nabla u|^q \varphi^b d\mu \right)^{\frac{m-1-a}{p+q-a}} \notag\\
&\quad\times \left( \int_{M} |\nabla \varphi|^{\frac{mp+q+a(q-m)}{p+q-m+1}} d\mu \right)^{\frac{p+q-m+1}{p+q-a}},
\end{align*}
which yields (\ref{2-5}).

Since $\int_M u^{p-a} |\nabla u|^q \varphi^b d\mu$ is finite due to the compactness of the support of $\varphi$, it follows that
\begin{align*}
&\left( \int_M u^{p-a} |\nabla u|^q \varphi^b d\mu \right)^{\frac{p+q-m+1}{p+q-a}} \notag\\
&\lesssim (2b)^{\frac{mp+q+a(q-m)}{p+q-a}}a^{-\frac{p(m-1)+a(q-m+1)}{p+q-a}} \left( \int_{M} |\nabla \varphi|^{\frac{mp+q+a(q-m)}{p+q-m+1}} d\mu \right)^{\frac{p+q-m+1}{p+q-a}},
\end{align*}
which implies (\ref{est-1}).

Besides, the existence of pair $(a, b)$ is easy to obtain from (\ref{cond-ab}). Hence, we complete the proof.
\end{proof}

\begin{remark}\label{rem-3}
In Lemma \ref{lem1}, since $b$ is only need to be chosen large enough, it suffices to verify that
such $a$ exists.
For our convenience, let us divide $\mathbb{R}^2\setminus\{p+q=m-1\}$ into four different parts $K_1, K_2, K_3, K_4$
(see Figure \ref{graph}):
\begin{align*}
&K_1 = \{(p, q)| p < m-1-q, q\leq m-1 \}, 		&& K_2 = \{(p, q)| p\geq 0, m-1-p<q\leq m-1 \} \\
&K_3 = \{(p, q)| p>m-1-q, q>m-1 \}, 			&& K_4 = \{ (p, q)|p < 0, m-1<q<m-1-p \}
\end{align*}
\end{remark}

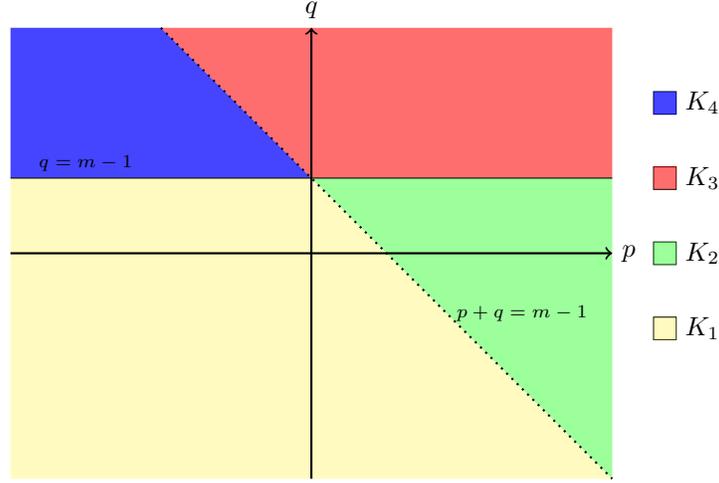
\begin{figure}[h]
\begin{tikzpicture}
\fill[blue, fill opacity=0.73] (-4, 1)--(-4, 3)--(-2, 3)--(0, 1);
\fill[red, fill opacity=0.57] (-2, 3)--(0, 1)--(4, 1)--(4, 3);
\fill[green, fill opacity=0.39] (0, 1)--(4, 1)--(4, -3);
\fill[yellow, fill opacity=0.25] (0, 1)--(-4, 1)--(-4, -3)--(4, -3);
\draw[thick, ->] (-4, 0)--(4, 0) node[right] {$p$ };
\draw[thick, ->] (0, -3)--(0, 3) node[above] {$q$ };
\draw[dotted, thick] (-2, 3)--(4, -3);
\draw (-4, 1)--(4, 1);
%\node at (5.2, 3-1) {$K_4$};
\node at (5.2, 3-1) {$K_4$};
\node at (5.2, 3-2) {$K_3$};
\node at (5.2, 3-3) {$K_2$};
\node at (5.2, 3-4) {$K_1$};
\draw (4.55, 2.85-1) rectangle (4.85, 3.15-1);
\fill[blue, fill opacity=0.73] (4.55, 2.85-1) rectangle (4.85, 3.15-1);
\draw (4.55, 2.85-2) rectangle (4.85, 3.15-2);
\fill[red, fill opacity=0.57] (4.55, 2.85-2) rectangle (4.85, 3.15-2);
\draw (4.55, 2.85-3) rectangle (4.85, 3.15-3);
\fill[green, fill opacity=0.39] (4.55, 2.85-3) rectangle (4.85, 3.15-3);
\draw (4.55, 2.85-4) rectangle (4.85, 3.15-4);
\fill[yellow, fill opacity=0.25] (4.55, 2.85-4) rectangle (4.85, 3.15-4);
\node at (-3, 1.2) {\begin{scriptsize}$q=m-1$\end{scriptsize}};
\node at (2.8, -.8) {\begin{scriptsize}$p+q=m-1$\end{scriptsize}};
\end{tikzpicture}
\caption{$K_1, K_2, K_3, K_4 $}
\label{graph}
\end{figure}

Based on location of $(p,q)$, we give the following admissible choice of $a$:
\begin{center}
\begin{align*}
\text{If }(p, q) \in \left\{
\begin{array}{l}
K_1, \\K_2, \\K_3, \\K_4,
\end{array}
\right.
\text{we can choose } a \text{ such that}
\left\{
\begin{array}{l}
a>m-1 					. \\
a<m-1 					. \\
\frac{p(1-m)}{q-m+1}<a<m-1		. \\
m-1<a<\frac{p(1-m)}{q-m+1}		. \\
\end{array}
\right.
\end{align*}
\end{center}

Now we are ready to give the proof of Theorem \ref{thm-non}.
\begin{proof}[Proof of Theroem \ref{thm-non} \textbf{(I)}]
Let $h$ be a smooth function defined in $[0,\infty)$ such that $0\leq h\leq1$ and
\begin{align}\label{def-h}
\begin{array}{l}
h(t)=1, \;t\in\left[ 0, 1 \right];
\quad h(t)=0,\; t\in \left[2, +\infty \right),\quad |h^{\prime}(t)|\leq \rho,\; t\in[0,\infty).
\end{array}
\end{align}
where $\rho>1$.

Let us define
\begin{align*}
\eta _k(x) = h\left(\frac{r(x)}{2^{k}}\right),
\end{align*}
where $r(x)=d(x,o)$.

Consider a sequence of functions $\{\varphi_{i}(x)\}_{i\in\mathbb{N}}$ by
\begin{align}\label{test-i}
\varphi_{i}(x)=\frac{1}{i}\sum\limits_{k=i+1}^{2i} \eta_{k}(x),
\end{align}
It is easy to see that $\varphi_i(x)=1$ for $x\in B_{2^{i+1}}$, and $\varphi_i(x)=0$ for $x\in B_{2^{2i+1}}^c$.
By the disjointness of the support of $\nabla\eta_k$, we have
\begin{align}\label{3-1}
|\nabla \varphi_{i}(x)|^\theta =
\frac{1}{i^\theta}\sum\limits_{k=i+1}^{2i} |\nabla\eta_{k}(x)|^\theta \lesssim
\frac{1}{i^\theta}\sum\limits_{k=i+1}^{2i} \frac{1}{2^{k\theta}}
\chi_{\{ 2^k \leq r(\cdot) \leq 2^{k+1}\}}(x).
\end{align}
Substituting $\varphi=\varphi_i$ in (\ref{est-1}), we arrive
\begin{align}\label{3-2}
& \int_{B_{2^{i+1}}} u^{p-a} |\nabla u|^q d\mu \notag\\
& \lesssim (2b)^{\frac{mp+q+a(q-m)}{p+q-a}}a^{-\frac{p(m-1)+a(q-m+1)}{p+q-m+1}}\int_{\text{supp}|\nabla\varphi_i|}|\nabla\varphi_i|^{\frac{mp+q+a(q-m)}{p+q-m+1}}d\mu
\nonumber\\
& \lesssim (2b)^{\frac{mp+q+a(q-m)}{p+q-a}} \frac{a^{-\frac{p(m-1)+a(q-m+1)}{p+q-m+1}}}{i^{\frac{mp+q+a(q-m)}{p+q-m+1}}}
\sum\limits_{k=i+1}^{2i} \int_{B_{2^{k+1}} \backslash B_{2^{k}} }
2^{-k\frac{mp+q+a(q-m)}{p+q-m+1}} d\mu \notag\\
& \lesssim (2b)^{\frac{mp+q+a(q-m)}{p+q-a}}\frac{a^{-\frac{p(m-1)+a(q-m+1)}{p+q-m+1}}}{i^{\frac{mp+q+a(q-m)}{p+q-m+1}}}
\sum\limits_{k=i+1}^{2i} V(2^{k+1})
2^{-k\frac{mp+q+a(q-m)}{p+q-m+1}}.
\end{align}
Since $G_1\subset K_2\cup K_3$, by Remark \ref{rem-3}, let us take
\begin{align*}
a=\frac{1}{i}.
\end{align*}
and $b$ be some large fixed constant.

Recalling (\ref{vol-1}), and noting that $p+q>m-1$ and $q<m$, we obtain
\begin{align}\label{3-3}
\int_{B_{2^{i+1}}} u^{p-\frac{1}{i}} |\nabla u|^q d\mu & \lesssim i^{-1-\frac{m-1-1/i}{p+q-m+1}}
\sum\limits_{k=i+1}^{2i}
2^{\frac{k(m-q)}{i(p+q-m+1)}} k^{\frac{m-1}{p+q-m+1}} \notag\\
& \lesssim i^{-1+\frac{1/i}{p+q-m+1}}
\sum\limits_{k=i+1}^{2i}
2^{\frac{k(m-q)}{i(p+q-m+1)}} \nonumber\\
& \lesssim i^{\frac{1/i}{p+q-m+1}}.
\end{align}
Letting $i\rightarrow +\infty$ in (\ref{3-3}), by Fatou's lemma, we obtain
\begin{align}\label{3-bdd}
\int_{M} u^{p} |\nabla u|^q d\mu<\infty.
\end{align}
Substituting $\varphi=\varphi_{i}$ into (\ref{2-5}),
and repeating the same procedures, we obtain
\begin{align}\label{3-4}
\int_{B_{2^{i+1}}} u^{p-\frac{1}{i}} |\nabla u|^q d\mu \lesssim \left( \int_{M\backslash B_{2^{i+1}}} u^{p-\frac{1}{i}} |\nabla u|^q d\mu \right)^{\frac{m-1-1/i}{p+q-1/i}}.
\end{align}
Combining with (\ref{3-bdd}), and by letting $i\rightarrow +\infty$ again in (\ref{3-4}), we have
\begin{align}\label{3-5}
\int_M u^{p} |\nabla u|^q d\mu = 0,
\end{align}
which induces a contradiction with that $u$ is nontrivial.
Hence, we complete of the proof of Theorem \ref{thm-non} \textbf{(I)}.
\end{proof}

By the result of Holopainen \cite[Proposition 1.7]{Holo99}, it is easy to derive that under (\ref{vol-2}), then there exists no
nontrivial positive solution to (\ref{equ}). However, to keep the consistency of the paper, we present another proof which is totally different from
Holopainen's.

\begin{proof}[Proof of Theorem \ref{thm-non} \textbf{(II)}]

We divide the proof into three cases:

\begin{itemize}
\item[(II-1)] $(p, q) \in \{p >m-1-q, q\geq m \}$,
\item[(II-2)] $(p, q) \in \{ p =m-1-q, q\geq m \} $
\item[(II-3)] $(p, q) \in \{ p<m-1-q, q\geq m \} $
\end{itemize}

\textbf{In case (II-1)}, since now $(p, q)\in K_2$, let
\begin{equation}\label{a-2-1}
a=m-1-\frac{1}{i}.
\end{equation}
and $b$ be some large fixed constant.

Substituting $\varphi=\varphi_i$ from (\ref{test-i}) into (\ref{est-1}), and using the same technique as in (\ref{3-2}), we obtain
\begin{align*}
\int_{B_{2^{i+1}}} u^{p-a} |\nabla u|^q d\mu
&\lesssim (2b)^{\frac{mp+q+a(q-m)}{p+q-a}}\frac{a^{-\frac{p(m-1)+a(q-m+1)}{p+q-m+1}}}{i^{\frac{mp+q+a(q-m)}{p+q-m+1}}}\nonumber\\
&\quad \times
\sum_{k=i+1}^{2i}\int_{B_{2^{k+1}}\setminus B_{2^k}}2^{-k\frac{mp+q+a(q-m)}{p+q-m+1}}d\mu,
\end{align*}
Combining with (\ref{vol-2}) and (\ref{a-2-1}), and noting that $(2b)^{\frac{mp+q+a(q-m)}{p+q-a}}a^{-\frac{p(m-1)+a(q-m+1)}{p+q-m+1}}$ is uniformly bounded for $i$,
we derive
\begin{align*}
\int_{B_{2^{i+1}}} u^{p-a} |\nabla u|^q d\mu
&\lesssim i^{-\frac{mp+q+a(q-m)}{p+q-m+1}}
\sum_{k=i+1}^{2i}\int_{B_{2^{k+1}}\setminus B_{2^k}}2^{-k\frac{mp+q+a(q-m)}{p+q-m+1}}d\mu\notag\\
&\lesssim i^{-\frac{mp+q+a(q-m)}{p+q-m+1}}
\sum\limits_{k=i+1}^{2i} V(2^{k+1})
2^{-k\frac{mp+q+a(q-m)}{p+q-m+1}} \notag\\
&\lesssim i^{-\frac{mp+q+a(q-m)}{p+q-m+1}}
\sum\limits_{k=i+1}^{2i}
2^{k\left(m-\frac{mp+q+a(q-m)}{p+q-m+1}\right)}k^{m-1} \notag\\
%&\lesssim i^{-\frac{mp+q+a(q-m)}{p+q-m+1}}
% \sum\limits_{k=i+1}^{2i}
%2^{\frac{k(q-m)}{i(p+q-m+1)}}k^{m-1}\notag\\
&\lesssim i^{m-1-\frac{mp+q+a(q-m)}{p+q-m+1}}
\sum\limits_{k=i+1}^{2i}
2^{\frac{k(q-m)}{i(p+q-m+1)}},
\end{align*}
which is
\begin{align}\label{gu-2}
\int_{B_{2^{i+1}}} u^{p-m+1+\frac{1}{i}} |\nabla u|^q d\mu&\lesssim i^{m-1-\frac{mp+q+a(q-m)}{p+q-m+1}}\sum_{k=i+1}^{2i}2^{\frac{k(q-m)}{i(p+q-m+1)}}\nonumber\\
&\lesssim i^{\frac{q-m}{i(p+q-m+1)}},
\end{align}
Letting $i\rightarrow +\infty$ in (\ref{gu-2}), and applying Fatou's Lemma, we have
\begin{align*}
\int_{M} u^{p-m+1} |\nabla u|^q d\mu<\infty.
\end{align*}

Substituting $\varphi=\varphi_{i}$ into (\ref{2-5}),
and repeating the same procedure as in the proof of Theorem \ref{thm-non} \textbf{(I)}, we obtain
\begin{align}
\int_{M} u^{p-m+1} |\nabla u|^q d\mu=0,
\end{align}
which yields a contradiction with that $u$ is a nontrivial positive solution.

\textbf{In case (II-2)}, letting $u=e^v-1$ in (\ref{equ}), we obtain
\begin{align}\label{eq-change}
\Delta_m v + (m-1) |\nabla v|^{m} +(e^v-1)^p e^{v(q-m+1)} |\nabla v|^q \leq 0,
\end{align}
hence, we have
\begin{align*}
\Delta_m v+(m-1)|\nabla v|^m\leq0,
\end{align*}
By Theorem \ref{thm-non} \textbf{(I)},
we obtain that if (\ref{vol-2}) is satisfied, then $v\equiv const$, which implies that
$u\equiv const.$ However, this contradicts with that $u$ is a nontrivial positive solution.

\textbf{In case (II-3)}, we take the same procedure as in case (II-1) except by letting
\begin{align*}
a=m-1+\frac{1}{i}.
\end{align*}
Thus we complete the proof of Theorem \ref{thm-non} \textbf{(II)}.
\end{proof}

\vskip3ex
\begin{proof}[Proof of Theorem \ref{thm-non} \textbf{(III)}]
We separate the proof into three cases:
\begin{itemize}
\item[(III-1)] $(p, q)\in G_3\cap\{p>m-1-q\}$
\item[(III-2)] $(p, q)\in G_3\cap\{p=m-1-q\}$
\item[(III-3)] $(p, q)\in G_3\cap\{p<m-1-q\}$
\end{itemize}

\textbf{In case (III-1)}, since $(p,q)\in K_3$, let
\begin{align}\label{def-a-3}
a= \frac{p(1-m)}{q-m+1}+ \frac{1}{i}.
\end{align}
and $b$ be some large fixed constant.

Substituting $\psi=\varphi_i$ from (\ref{test-i}) into (\ref{est-1}), applying the same arguments as before,
we have
\begin{align*}
\int_M u^{p-a} |\nabla u|^q \varphi_i^b d\mu
\lesssim
\int_{\text{supp}|\nabla \varphi_i|} |\nabla \varphi_i|^{\frac{mp+q+a(q-m)}{p+q-m+1}} d\mu,
\end{align*}
Combining with (\ref{vol-3}) and (\ref{def-a-3}), which is
\begin{align*}
\int_{B_{2^{i+1}}} u^{p-a} |\nabla u|^q d\mu
&\lesssim i^{-\frac{mp+q+a(q-m)}{p+q-m+1}}
\sum_{k=i+1}^{2i}\int_{B_{2^{k+1}}\setminus B_{2^k}}2^{-k\frac{mp+q+a(q-m)}{p+q-m+1}}d\mu\notag\\
&\lesssim i^{-\frac{mp+q+a(q-m)}{p+q-m+1}}
\sum\limits_{k=i+1}^{2i} V(2^{k+1})
2^{-k\frac{mp+q+a(q-m)}{p+q-m+1})} \notag\\
&\lesssim i^{-\frac{mp+q+a(q-m)}{p+q-m+1}}
\sum\limits_{k=i+1}^{2i}
2^{k\left(\frac{q}{q-m+1}-\frac{mp+q+a(q-m)}{p+q-m+1}\right)}k^{\frac{m-1}{q-m+1}} \notag\\
&\lesssim i^{-\frac{mp+q+a(q-m)}{p+q-m+1}}
\sum\limits_{k=i+1}^{2i}
2^{-\frac{k(q-m)}{i(p+q-m+1)}}k^{\frac{m-1}{q-m+1}}\notag\\
&\lesssim i^{\frac{m-1}{q-m+1}-\frac{mp+q+a(q-m)}{p+q-m+1}+1},
\end{align*}
where we have used that
$$\frac{q}{q-m+1}-\frac{mp+q+a(q-m)}{p+q-m+1}=-\frac{q-m}{i(p+q-m+1)}.$$
Hence, we obtain
\begin{align*}\label{u3-1}
\int_{B_{2^{i+1}}} u^{\frac{pq}{q-m+1}+\frac{1}{i}} |\nabla u|^q d\mu
\lesssim
i^{-\frac{q-m}{i(p+q-m+1)}},
\end{align*}
Taking the limits as $i\to\infty$, and by Fatou's lemma, we obtain
\begin{align*}
\int_{M} u^{\frac{pq}{q-m+1}} |\nabla u|^q d\mu <\infty,
\end{align*}
Repeating the same argument as in proof of Theorem \ref{thm-non} \textbf{(I)}, we obtain
\begin{align*}
\int_M u^{\frac{pq}{q-m+1}} |\nabla u|^q d\mu = 0,
\end{align*}
which contradicts with that $u$ is a nontrivial positive solution.

\textbf{In case (III-2)}, taking $u=e^v-1$, we obtain that $v>0$ is also a nontrival positive solution to (\ref{eq-change}).
Noting $p+q=m-1$ in case \textbf{(III-2)}, and (\ref{change}) is equivalent to
\begin{eqnarray}\label{eq-trans2}
\Delta_m v+(m-1)|\nabla v|^m+\left(\frac{e^v}{e^v-1}\right)^{-p}|\nabla v|^q\leq0,
\end{eqnarray}
Since $p<0$, and $\left(\frac{e^v}{e^v-1}\right)^{-p}>1$, we have
\begin{align*}
\Delta_mv + |\nabla v|^q \leq 0,
\end{align*}
Noting $m-1<q<m$, and applying Theorem \ref{thm-non} \textbf{(I)},
we obtain that if (\ref{vol-3}) is satisfied, then $v\equiv const$, which implies that
$u\equiv const$. However, it contradicts with that $u$ is a nontrivial positive solution.

\textbf{In case (III-3)},
one can repeat the same argument as in case \textbf{(III-1)} except by substituting
\begin{align*}
a = \frac{p(1-m)}{q-m+1} -\frac{1}{i}.
\end{align*}
Hence, we complete the proof of Theorem \ref{thm-non} \textbf{(III)}.
\end{proof}
\vskip3ex
\begin{proof}[proof of theorem \ref{thm-non} \textbf{(IV)}]
Since here $q=m-1, p<0$,  take
\begin{align*}
a=l(m-1)+\frac{1}{i}, \quad b=-\frac{l(m-1)}{p}+m+\frac{1}{i},
\end{align*}
where $l>1$ is to be chosen later.

Substituting $\varphi_i$ into (\ref{est-1}), repeating the same procedures, we obtain
\begin{align*}
\int_M u^{p-a} |\nabla u|^q \varphi_i^b d\mu
\lesssim (2b)^{\frac{mp+q-a}{p}} a^{-(m-1)} \int_{M} |\nabla \varphi_i|^{\frac{mp+q-a}{p}} d\mu.
\end{align*}
It follows that
\begin{align*}
\int_{B_{2^{i+1}}} u^{p-a} |\nabla u|^q d\mu
&\lesssim (2b)^{\frac{mp+q-a}{p}}a^{-\frac{p(m-1)}{p}} i^{-\frac{mp+q-a}{p}}\nonumber\\
&\quad \times
\sum_{k=i+1}^{2i}\int_{B_{2^{k+1}}\setminus B_{2^k}}2^{-k\frac{mp+q-a}{p}}d\mu\notag\\
&\lesssim i^{-\frac{mp+q-a}{p}}
\sum\limits_{k=i+1}^{2i} V(2^{k+1})
2^{-k\frac{mp+q-a}{p}} \notag\\
&\lesssim i^{-\frac{mp+q-a}{p}}
\sum\limits_{k=i+1}^{2i}
2^{k\left(\alpha-\frac{mp+q-a}{p}\right)},
%\notag\\
%&\lesssim i^{1-\frac{mp+q-a}{p}},
\end{align*}
Letting $l$ a fixed large enough constant such that for all $i$
$$\alpha-\frac{mp+q-a}{p}<0,$$
we obtain
\begin{eqnarray}
\int_{B_{2^{i+1}}} u^{p-a} |\nabla u|^q d\mu\lesssim i^{1-\frac{mp+q-a}{p}},
\end{eqnarray}
Further, we can also require that $l$ satisfying
$$1-\frac{mp+q-a}{p}<c<0.$$
%\begin{equation*}
%\left\{
%\begin{array}{ll}
%\alpha-\frac{mp+q-a}{p}<0, \\
%1-\frac{mp+q-a}{p}<0.
%\end{array}
%\right.
%\end{equation*}
then letting $i\to \infty$, we obtain
\begin{align*}
\int_Mu^{p-l(m-1)}|\nabla u|^qd\mu=0,
\end{align*}
which contradicts with that $u$ is nontrivial positive. Hence, we complete the proof for Theorem \ref{thm-non} \textbf{(IV)}.
\end{proof}

\vskip3ex

\begin{proof}[proof of Theorem \ref{thm-non} \textbf{(V)}]
When $(p, q)\in G_5$, taking the change $u=e^v$ (note that here $v$ may not be positive),
we obtain that $v$ satisfies
\begin{equation}\label{eq-trans3}
\Delta_m v + (m-1)|\nabla v|^m + |\nabla v|^q \leq 0,\quad\mbox{on $M$}.
\end{equation}
Noting in this case $q\leq m-1$.
For every $\lambda \in (m-1, m)$, applying Young's inequality, we obtain
\begin{align*}
|\nabla v|^\lambda \leq
\frac{\lambda-q}{m-q}|\nabla v|^m+\frac{m-\lambda}{m-q}|\nabla v|^q.
\end{align*}
It follows that
\begin{align*}
|\nabla v|^\lambda\leq
|\nabla v|^m + |\nabla v|^q.
\end{align*}
Hence from (\ref{eq-trans3}), we obtain
\begin{align}\label{eq-td}
\Delta_m v + l|\nabla v|^\lambda \leq 0, \quad\mbox{on $M$},
\end{align}
where $l=\min\{m-1,1\}$.

Define
\begin{align}\label{phiR}
\varphi_R(x) = h(\frac{|x|}{R}),
\end{align}
where $h$ is the same as in (\ref{def-h}).

Multiplying (\ref{eq-td}) by $\varphi_R^{z}$,
we have
\begin{align}
\int_{M}|\nabla v|^{\lambda} \varphi_R^zd\mu&\leq \frac{z}{l}\int_{M}|\nabla v|^{m-2}\varphi_R^{z-1}(\nabla v, \nabla\varphi_R)d\mu\nonumber\\
&\leq \frac{z}{l}\left(\int_M|\nabla v|^{\lambda}\varphi_R^z d\mu\right)^{\frac{m-1}{\lambda}}\left(\int_{M}|\nabla \varphi_R|^zd\mu\right)^{\frac{\lambda-m+1}{\lambda}},
\end{align}
where we take
$$z=\frac{\lambda}{\lambda-m+1}.$$

By the boundedness of $\int_{M}|\nabla v|^{\lambda} \varphi_R^zd\mu$, and $\varphi_R=1$ in $B_R$, and (\ref{vol-5}), we obtain
\begin{align}\label{G5-lim}
\int_{B_R}|\nabla v|^{\lambda}d\mu&\leq \left(\frac{z}{l}\right)^z\int_{M}|\nabla \varphi_R|^zd\mu\nonumber\\
&\lesssim \left(\frac{\rho z}{lR}\right)^zV(2R)\nonumber\\
&\lesssim \left(\frac{C_1z}{R}\right)^ze^{2\kappa R},
\end{align}
where we have also used that
$|\nabla \varphi_R|\leq\frac{\rho}{R}$, $C_1=\frac{\rho}{l}$.

Now let us connect $z$ and $R$ by defining
\begin{align}\label{zR}
z=\theta R,
\end{align}
where $\theta$ is a fixed positive constant to be determined later. It is easy to see that $\lambda\to (m-1)_{+}$ is equivalent to $R\to +\infty$.

Now let $\lambda\to (m-1)_{+}$ in (\ref{G5-lim}), by Fatou's lemma and (\ref{zR}),
\begin{align}\label{kappa-0}
\int_{M}|\nabla v|^{m-1}d\mu&\leq \lim_{\lambda\to (m-1)_{+}}
\int_{B_R}|\nabla v|^{\lambda}d\mu\lesssim \lim_{R\to \infty} \left(\frac{C_1z}{R}\right)^ze^{2\kappa R}\nonumber\\
&\asymp \lim_{R\to \infty}e^{R(2\kappa+\theta\ln(C_2\theta))}.
\end{align}
If we want
\begin{align}\label{kappa-theta}
2\kappa+\theta\ln(C_1\theta)<0,
\end{align}
which is equivalent to
\begin{align*}
e^{2\kappa}<\frac{1}{(C_1\theta)^{\theta}},
\end{align*}
Since $\frac{1}{(C_1\theta)^{\theta}}$ attains its maximum at $\theta=\frac{1}{C_1e}$.
Hence if $\kappa$ satisfies
\begin{align}\label{kappa-rough}
0<\kappa<\frac{1}{2C_1e}=\frac{\min\{m-1,1\}}{2\rho e},
\end{align}
there always exists $\theta>0$ such that (\ref{kappa-theta}) holds.

%Actually, we obtain a rough estimate for $\kappa$
%\begin{align}\label{kappa-rough}
%0<\kappa<\frac{\min\{m-1,1\}}{2C_1e}.
%\end{align}

Under the above choice of $\kappa$ and $\theta$, from (\ref{kappa-0}), we obtain
\begin{align*}
\int_{M}|\nabla v|^{m-1}d\mu=0,
\end{align*}
which contradicts with that $u$ is a nontrivial positive solution.
Thus, we complete the proof for Theorem \ref{thm-non} \textbf{(V)}.
\end{proof}

\begin{remark}
\rm{By modifying function $h$, it is possible to let $\rho$ close to $1_{+}$ in (\ref{def-h})}.
\end{remark}

\vskip3ex
\begin{proof}[proof of theorem \ref{thm-non} \textbf{(VI)}] When $(p, q)\in G_6$, define
\begin{align*}
\Omega_k := \left\{ x \in M |0<u(x)\leq k, |\nabla u(x)| \neq 0 \right\}.
\end{align*}
Since $u$ is a nontrivial positive solution, without loss of generality, we can always assume that $\mu(\Omega_k)>0$ for some fixed large $k$.
Now let $v=\frac{u}{k}$, we know $v$ satisfies
$$\Delta_mv+k^{p+q-m+1}v^p|\nabla v|^q\leq0,\quad\mbox{on $M$}.$$
Let $\Omega= \left\{ x \in M |0< v(x)\leq 1, |\nabla v(x)| \neq 0 \right\}$, and hence $\mu(\Omega)>0$.

Recalling definition of $\Omega$, we obtain
\begin{align}\label{u4-Omega}
\int_{\Omega \cap B_R} \frac{|\nabla v|^q}{v} d\mu \leq \int_{B_R} v^{p-a} |\nabla v|^q d\mu.
\end{align}

Substituting $\varphi = \varphi_R$ in (\ref{phiR}) into (\ref{est-1}) but with $u$ replacing by $v$, and combining with (\ref{vol-6}), we obtain
\begin{align}\label{u4-1}
&\int_{B_R} v^{p-a} |\nabla v|^q d\mu\nonumber\\
&\lesssim (2b)^{\frac{mp+q+a(q-m)}{p+q-m+1}}a^{-\frac{p(m-1)+a(q-m+1)}{p+q-m+1}}k^{a-p-q}
\int_{\text{supp}|\nabla\varphi_R|}|\nabla\varphi_R|^{\frac{mp+q+a(q-m)}{p+q-m+1}}d\mu \notag\\
&\lesssim (2b)^{\frac{mp+q+a(q-m)}{p+q-m+1}}a^{-\frac{p(m-1)+a(q-m+1)}{p+q-m+1}}k^{a-p-q} V(2R) \left( \frac{1}{R} \right) ^{\frac{mp+q+a(q-m)}{p+q-m+1}} \notag\\
&\lesssim (2b)^{\frac{mp+q+a(q-m)}{p+q-m+1}}a^{-\frac{p(m-1)+a(q-m+1)}{p+q-m+1}}k^{a-p-q} e^{2\kappa R\ln (2R)} R^{-\frac{mp+q+a(q-m)}{p+q-m+1}},
\end{align}
Combining (\ref{u4-1}) with (\ref{u4-Omega}), we have
\begin{align}
\int_{\Omega \cap B_R} \frac{|\nabla v|^q}{v} d\mu \lesssim (2b)^{\frac{mp+q+a(q-m)}{p+q-m+1}}a^{-\frac{p(m-1)+a(q-m+1)}{p+q-m+1}}k^{a} e^{2\kappa R\ln (2R)}R^{-\frac{mp+q+a(q-m)}{p+q-m+1}},\nonumber\\
\end{align}
where the constant $k^{-p-q}$ is absorbed into $\lesssim$.

Choose
\begin{align*}
b=\frac{2(q-m)}{p+q-m+1}a,
\end{align*}
we know when $a$ is large enough, (\ref{cond-ab}) is satisfied for such choices of $a, b$.

Let us connect $a$ and $R$ by
\begin{align*}
a=2 R.
\end{align*}
Let us write
\begin{align*}
\frac{mp+q+a(q-m)}{p+q-m+1}&=c_1a+c_2,\nonumber\\
2b&=c_3a,\nonumber\\
-\frac{p(m-1)+a(q-m+1)}{p+q-m+1}&=c_4a+c_5,\nonumber\\
-\frac{mp+q+a(q-m)}{p+q-m+1}&=c_6a+c_7,\nonumber\\
\end{align*}
where
\begin{align*}
&c_1=\frac{q-m}{p+q-m+1},\quad c_2=\frac{mp+q}{p+q-m+1},\nonumber\\
&c_3=\frac{4(q-m)}{p+q-m+1},\nonumber\\
&c_4=-\frac{q-m+1}{p+q-m+1},\quad c_5=-\frac{p(m-1)}{p+q-m+1},\nonumber\\
&c_6=-\frac{q-m}{p+q-m+1},\quad c_7=-\frac{mp+q}{p+q-m+1}.
\end{align*}
Combining with the above, we have
\begin{align}\label{u4-1-1}
\int_{\Omega \cap B_{\frac{a}{2}}} \frac{|\nabla v|^q}{v} d\mu&\lesssim
e^{\frac{mp+q+a(q-m)}{p+q-m+1}\ln (2b)}e^{-\frac{p(m-1)+a(q-m+1)}{p+q-m+1}\ln a}\nonumber\\
&\quad \times e^{a\ln k+2\kappa R\ln (2R)}e^{-\frac{mp+q+a(q-m)}{p+q-m+1}\ln R}\nonumber\\
&\lesssim e^{(c_1a+c_2)\ln(c_3a)}e^{(c_4a+c_5)\ln a}\nonumber\\
&\quad \times e^{a\ln k+\kappa a\ln a+(c_6a+c_7)\ln\frac{a}{2}}\nonumber\\
&\lesssim e^{C_1a\ln a+C_2a+C_3\ln a+C_4}
\end{align}
where
\begin{align}\label{kappa-value}
&C_1=c_1+c_4+\kappa+c_6=\kappa+\frac{m-1-q}{p+q-m+1},\\
&C_2=c_1\ln c_3+\ln k-c_6\ln 2,\nonumber\\
&C_3=c_2+c_5+c_7,\nonumber\\
&C_4=c_2\ln c_3-c_7\ln 2.\nonumber
\end{align}
From (\ref{kappa-value}), we know if
$$0<\kappa<\frac{m-1-q}{m-1-p-q},$$
then $C_1<0$, by letting $a\rightarrow+\infty$ in (\ref{u4-1-1}), we obtain
\begin{align*}
\int_{\Omega} \frac{|\nabla v|^q}{v} d\mu=0,
\end{align*}
which contradicts with the definition of $\Omega$. Hence, we complete the proof for Theorem \ref{thm-non} \textbf{(VI)}.
\end{proof}

\section{Counter Examples}
We take the change
\begin{align}\label{change}
v = \left\{
\begin{array}{ll}
\int_0^{u}\exp{\left(\frac{s^{p+1}}{(p+1)(m-1)}\right)}ds, &\quad\text{$p\neq-1$}, \\
u^{\frac{m}{m-1}}, &\quad\text{$p=-1$},
\end{array}
\right.
\end{align}
The above change is well defined, since both the ranges of $\int_0^{t}\exp{\left(\frac{s^{p+1}}{(p+1)(m-1)}\right)}ds$
and $t^{\frac{m}{m-1}}$ are $[0,\infty)$, see \cite{HB15} and also the references therein.

\vskip1ex
Based on the above change, it is easy to verify the following lemma.
\begin{lemma}\label{lem-m}
\rm{If $v$ is a positive solution to $\Delta_m v\leq0$, then $u$ in (\ref{change}) is a positive solution to
$\Delta_m u+u^p|\nabla u|^m\leq0$.}
\end{lemma}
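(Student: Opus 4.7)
The plan is to verify Lemma \ref{lem-m} by a direct change-of-variable calculation. Writing $v=F(u)$ with $F$ as in (\ref{change}), the function $F$ is smooth on $(0,\infty)$ with $F'>0$, so $F:[0,\infty)\to[0,\infty)$ is a strictly increasing bijection and the substitution is invertible. The key algebraic ingredient is the pointwise identity
\[
(m-1)\frac{F''(t)}{F'(t)} \;=\; t^{p}, \qquad t>0,
\]
which I would verify in the two cases separately. For $p\neq -1$, differentiating $\log F'(t)=t^{p+1}/[(p+1)(m-1)]$ yields $F''(t)/F'(t)=t^{p}/(m-1)$; for $p=-1$, from $F'(t)=\tfrac{m}{m-1}t^{1/(m-1)}$ one gets $F''(t)/F'(t)=1/[(m-1)t]=t^{p}/(m-1)$. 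Either way the identity holds, and it is precisely this identity that encodes the gradient nonlinearity $u^{p}|\nabla u|^{m}$.

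Next, I would apply the chain rule: $\nabla v = F'(u)\nabla u$, hence $|\nabla v|^{m-2}\nabla v = F'(u)^{m-1}|\nabla u|^{m-2}\nabla u$. Taking divergence gives
\[
\Delta_m v \;=\; F'(u)^{m-1}\,\Delta_m u \;+\; (m-1)\,F'(u)^{m-2}F''(u)\,|\nabla u|^{m},
\]
and the displayed identity allows one to factor $F'(u)^{m-1}$ out:
\[
\Delta_m v \;=\; F'(u)^{m-1}\!\left[\Delta_m u + u^{p}|\nabla u|^{m}\right].
\]
Since $F'(u)>0$, the two inequalities $\Delta_m v\le 0$ and $\Delta_m u + u^{p}|\nabla u|^{m}\le 0$ are equivalent pointwise, and positivity of $u$ follows from positivity of $v$ via $F^{-1}$.

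The main obstacle is to make this rigorous in the weak $W^{1,m}_{\mathrm{loc}}$ sense used throughout the paper. The plan is as follows: given a nonnegative test function $\phi\in W^{1,m}_{c}(M)$ intended for the $u$-inequality, insert $\psi=\phi\cdot F'(u)^{-(m-1)}$ into the weak formulation of $\Delta_m v\le 0$ and expand $\nabla\psi$ by the product rule; the factor $(m-1)F''(u)/F'(u)=u^{p}$ drops out of the $\nabla u$-term, producing exactly the weak form of $\Delta_m u + u^{p}|\nabla u|^{m}\le 0$. The subtle step will be justifying that $\psi$ is admissible: one needs $F'(u)^{-(m-1)}$ to be locally bounded with $F'(u)^{-(m-1)}\in W^{1,m}_{\mathrm{loc}}$, which follows from the local positivity of $u$, or, if needed, from first working with $u+\varepsilon$ and letting $\varepsilon\downarrow 0$. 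In the constructive setting of Section 3, where $v$ will be taken as a smooth radial function on a model manifold, this regularity is automatic and the above computation is classical.
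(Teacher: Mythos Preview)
Your computation is correct and is exactly the verification the paper has in mind: the paper gives no explicit proof beyond ``it is easy to verify,'' and your chain-rule calculation with the key identity $(m-1)F''(t)/F'(t)=t^{p}$ is the standard way to carry this out. Your remark that in Section~3 the lemma is only applied to smooth radial functions on model manifolds, so that the weak-solution subtleties do not actually arise, is also well taken.
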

%\begin{proof}
%If $v$ is a positive solution to $\Delta_m v\leq0$, then $u$ defined by (\ref{change})
%is a positive solution to for $p\neq-1$
%\begin{align}
%\exp{\left(\frac{s^{p+1}}{p+1}\right)}\left(\Delta_m u+u^p|\nabla u|^m\right)\leq0,
%\end{align}
%and respectively for $p=-1$
%$$\left(\frac{m}{m-1}\right)^{m-1}(u\Delta_m u+|\nabla u|^m)\leq0.$$
%\end{proof}

\begin{proof}[Proof of Theorem \ref{thm-ex} \textbf{(B)} of case $q=m$]

If (\ref{vol-2-1}) are satisfied, then by \cite[Theorem 2.5]{Holo00},
we know there exists a noncompact complete manifold on which $\Delta_m v\leq0$ admits a nontrivial positive solution. Combining with Lemma \ref{lem-m}, we know
there exists a nontrivial positive solution $u$ to $\Delta_m u+u^p|\nabla u|^m\leq0$. Hence, we complete the proof.
\end{proof}

Thus, we ignore the case of $q=m$ in the following.

\begin{proof}[Proof of Theorem \ref{thm-ex} \textbf{(A)} and \textbf{(B)}]
Let $(\mathbb{R}^n, g)$ be a Riemannian manifold with Riemannian metric
$g=dr^2+\psi(r)^2d\theta^2$,
where $(r, \theta)$ is the polar coordinates in $\mathbb{R}^n$,
and $\psi(r)$ is a smooth, positive increasing function on $(0, +\infty)$ such that
\begin{align*}
\psi(r) = \left\{
\begin{array}{ll}
r, &\quad\text{for small $r$}, \\
c_0\left(r^{\alpha-1} (\ln r)^\beta\right)^{\frac{1}{n-1}}, &\quad\text{for large $r$},
\end{array}
\right.
\end{align*}
where
\begin{align}\label{def-alpha}
\alpha=\left\{
\begin{array}{ll}
\frac{mp+q}{p+q-m+1}, &\quad (p, q)\in G_1, \\
m, &\quad (p, q)\in G_2\setminus\{q=m\},
\end{array}
\right.
\end{align}
and
\begin{align}\label{def-beta}
\beta>\left\{
\begin{array}{ll}
\frac{m-1}{p+q-m+1}, &\quad (p, q)\in G_1, \\
m-1, &\quad (p, q)\in G_2\setminus\{q=m\},
\end{array}
\right.
\end{align}
and the constant $c_0$ is to make $\psi(r)$ be an increasing function.

Choose $o$ to be the origin point, and the surface area $S$ of the ball $B_r$ can be determined by
\begin{align*}
S(r)=
\omega_n
\left\{
\begin{array}{ll}
r^{n-1}, &\quad\text{for small $r$}, \\
c_0^{n-1}r^{\alpha-1} (\ln r)^\beta, &\quad\text{for large $r$}.
\end{array}
\right.
\end{align*}
where $\omega_n$ is the surface area of unit ball in $\mathbb{R}^n$.

Since
\begin{align*}
V(r) =\int_0^r S(\tau)d\tau.
\end{align*}
It is easy to verify that, for all large enough $r$
\begin{align*}
V(r) \lesssim r^\alpha(\ln r)^\beta.
\end{align*}

Since the solution $u$ to be constructed here is radial, let us denote by $u(r)$,
then (\ref{equ}) leads to
\begin{align}\label{rad}
(S|u^{\prime}|^{m-2}u^{\prime})^{\prime}+Su^{p}|u^{\prime}|^q\leq 0.
\end{align}
In the following, we will construct the solution in three steps: the first step is to construct the solution near infinity, the second step is to construct the solution near origin, and the last step is to glue the two parts solution in a proper way.  Our method is motivated by the gluing technique used in \cite{Grigoryan13} and \cite{Wang-Xiao}.

\textbf{Step 1, solution near infinity:\;}
Set
\begin{align}\label{rad-inf}
u_{\eta}(r)= \left\{
\begin{array}{ll}
\int_r^{\infty}S(s)^{-\frac{1}{m-1}}(\ln s)^{\frac{\eta}{m-1}}ds, & (p, q)\in G_1, \\
(\ln r)^{-\frac{\eta}{m-1}}, & (p, q)\in G_2\setminus\{q=m\},
\end{array}	
\right.
\end{align}
for $r>R_0$, where $\eta, R_0$ are constants to be chosen later. According to the definition of $S(r)$, we know that the above
$u_\eta$ is well defined.

We will show $u_\eta$ is a solution of (\ref{rad}) on $[R_0,\infty)$ for some $\eta$.
Since
\begin{align*}
u^{\prime}_{\eta} = \left\{
\begin{array}{ll}
-S(r)^{-\frac{1}{m-1}}(\ln r)^{\frac{\eta}{m-1}}, & (p, q)\in G_1, \\
-\frac{\eta}{m-1} r^{-1} (\ln r)^{\frac{1-\eta-m}{m-1}}, & (p, q)\in G_2\setminus\{q=m\},
\end{array}
\right.
\end{align*}
and
\begin{align}\label{inf-est1}
&\left(S|u^{\prime}_{ \eta}|^{m-2}u^{\prime}_{ \eta}\right)^{\prime} \notag\\
&= \left\{
\begin{array}{ll}
-\eta r^{-1}(\ln r)^{\eta-1}, & (p, q)\in G_1, \\
\omega_nc_0^{n-1}(\eta+m-1-\beta) \left(\frac{\eta}{m-1}\right)^{m-1} r^{-1} (\ln r)^{\beta -m-\eta}, & (p, q)\in G_2\setminus\{q=m\},
\end{array}
\right.
\end{align}
and
\begin{align}\label{inf-est2}
Su_\eta^{p}|u_\eta^{\prime}|^q \lesssim \left\{
\begin{array}{ll}
r^{-1} (\ln r)^{\frac{\eta (p+q)-\beta (p+q-m+1)}{m-1}}, & (p, q)\in G_1, \\
\eta ^q r^{m -q-1} (\ln r)^{\frac{\beta (m-1)-q (\eta +m-1)-\eta p}{m-1}}, & (p, q)\in G_2\setminus\{q=m\},
\end{array}
\right.
\end{align}
%\begin{align}\label{inf-est2}
%Su_\eta^{p}|u_\eta^{\prime}|^q \lesssim \left\{
%\begin{array}{ll}
%r^{\frac{mp+q-m+1-\alpha(p+q-m+1)}{m-1}} (\ln r)^{\frac{\eta (p+q)-\beta (p+q-m+1)}{m-1}}, & (p, q)\in G_1, \\
%\eta ^q r^{m -q-1} (\ln r)^{\frac{\beta (m-1)-q (\eta +m-1)-\eta p}{m-1}}, & (p, q)\in G_2
%\end{array}
%\right.
%\end{align}

$\bullet$ \textbf{Case of $(p,q)\in G_1$}.
Recalling $\beta > \frac{m-1}{p+q-m+1} $, we tak $\eta$ such that
$$\frac{\eta (p+q)-\beta (p+q-m+1)}{m-1}<\eta-1, $$
more precisely,
$$0<\eta<\beta-\frac{m-1}{p+q-m+1}. $$
Combining with (\ref{inf-est1}) and (\ref{inf-est2}), we have for large $r$,
\begin{align}\label{inf-ver-g1+}
(S|u_\eta^{\prime}|^{m-2}u_\eta^{\prime})^{\prime}+Su_\eta^{p}|u_\eta^{\prime}|^q\leq 0. \quad\mbox{ for $(p, q)\in G_1$}.
\end{align}

$\bullet$ \textbf{Case of $(p, q)\in G_2\setminus \{q=m\}$}. Since $\beta > m-1$, let us choose $\eta$ such that
\begin{align*}
\eta+m-1-\beta < 0
\end{align*}
which is
\begin{align*}
0<\eta<\beta-m-1
\end{align*}

Since $(p, q)\in G_2\setminus \{q=m\}$, it follows that $q>m$.
Noting $m-q-1<-1$, combining with (\ref{inf-est1}) and (\ref{inf-est2}), there exists $R_0$ such that we for all $r\geq R_0$,
\begin{align}\label{inf-ver-g2}
(S|u_\eta^{\prime}|^{m-2}u_\eta^{\prime})^{\prime}+Su_\eta^{p}|u_\eta^{\prime}|^q\leq 0. \quad \mbox{for $(p, q)\in G_2\setminus \{q=m\}$}.
\end{align}

Combining (\ref{inf-ver-g1+}) and (\ref{inf-ver-g2}),
we derive $u_\eta$ as in (\ref{rad-inf}) is a solution of (\ref{rad})
for $r\in [R_0, +\infty)$. Moreover, in both cases, we know
\begin{align}\label{u-der<0}
u_\eta^{\prime}(r) < 0, \quad\mbox{$r\in [R_0, +\infty)$}.
\end{align}

\textbf{Step 2, solution near origin:\;}
Consider the following ordinary differential equation
\begin{align}\label{ori-ode}
(S|u^{\prime}|^{m-2}u^{\prime})^{\prime}+\lambda Su^{p}|u^{\prime}|^q\leq0,
\end{align}
where $\lambda$ is to be chosen later.

We construct different $u$ of (\ref{ori-ode}) for $(p,q)\in G_1$ and $(p,q)\in G_2\setminus\{q=m\}$ respectively.

\textbf{$\bullet$ Case of $(p, q)\in G_1$},
let
\begin{align}\label{rad-ori}
u_{1,\rho}(r)=A_{1,\rho}\int_{r}^{\rho}\left(1-e^{-\frac{x}{\rho}}\right)^{\theta}dx,
\end{align}
where $r\in[0, \rho)$, $\rho>R_0$, and $\theta>0$ is to be chosen later, and
$A_{1,\rho}$ is a constant such that $u_{1,\rho}(0)=1$.
Then we have
\begin{align}\label{u1-1}
(S|u^{\prime}_{1,\rho}|^{m-2}u^{\prime}_{1,\rho})^{\prime}&=-A^{m-1}_{1,\rho}\left(S(1-e^{-\frac{r}{\rho}})^{\theta(m-1)}\right)^{\prime}\nonumber\\
&=-\frac{A^{m-1}_{1,\rho}\theta(m-1)}{\rho}S(1-e^{-\frac{r}{\rho}})^{\theta(m-1)-1}e^{-\frac{r}{\rho}}\nonumber\\
&\quad-A^{m-1}_{1,\rho}S^{\prime}(1-e^{-\frac{r}{\rho}})^{\theta(m-1)}\nonumber\\
&\leq-\frac{A^{m-1}_{1,\rho}\theta(m-1)}{\rho}S(1-e^{-\frac{r}{\rho}})^{\theta(m-1)-1}e^{-\frac{r}{\rho}} \notag\\
&\leq -\frac{A^{m-1}_{1,\rho}\theta(m-1)}{\rho}S(1-e^{-\frac{r}{\rho}})^{\theta(m-1)-1}.
\end{align}
where we have used that $S$ is an increasing function.

When $(p, q)\in G_1$, we have $p\geq0$.
Noting that $0\leq u_{1,\rho}\leq1$, we have
\begin{align}\label{u1-2}
\lambda Su_{1,\rho}^{p}|u_{1, \rho}^{\prime}|^q \leq
\lambda A_{1,\rho}^q S (1-e^{-\frac{r}{\rho}})^{q\theta}.
\end{align}
To suffice that $u_{1,\rho}$ is a solution to (\ref{ori-ode}) on $(0,\rho)$, from (\ref{u1-1}) and (\ref{u1-2}), we require that
\begin{align*}
-\frac{A^{m-1}_{1,\rho}\theta(m-1)}{\rho}S(1-e^{-\frac{r}{\rho}})^{\theta(m-1)-1}+\lambda A_{1,\rho}^q S (1-e^{-\frac{r}{\rho}})^{q\theta}\leq0,
\end{align*}
which yields
\begin{align*}
0<\lambda\leq\frac{\theta(m-1) A_{1, \rho}^{m-1-q}(1-e^{-\frac{r}{\rho}})^{\theta(m-1-q)-1}}{\rho},\quad\mbox{for all $r\in(0,\rho)$}.
\end{align*}
To guarrtee the existence of such $\lambda$ in the above, we need that
\begin{align}\label{theta1}
\theta \left\{
\begin{array}{ll}
=\mbox{any positive value}, \quad \mbox{when $q\geq m-1$}, \\
<\frac{1}{m-1-q}, \quad\mbox{when $q<m-1$},
\end{array}
\right.
\end{align}
hence, we can choose
\begin{align}\label{lambda-1-rho}
0<\lambda\leq\frac{\theta(m-1)(1-e^{-1})^{\theta(m-1-q)+1}}{\rho A_{1, \rho}^{q-m+1}}.
\end{align}
To suffice $u_{1,\rho}^p|\nabla u|^q\in L^1(B_{\epsilon})$, where $B_{\epsilon}$ is a small ball centered at $o$.
We also need that
\begin{align*}
\theta q+n>0,
\end{align*}
Combining with (\ref{theta1}), we can choose $\theta$ satisfying
\begin{align*}
\theta \left\{
\begin{array}{ll}
=\mbox{any positive value}, \quad q\geq m-1 \\
<\frac{1}{m-1-q}, \quad 0\leq q<m-1,\\
<\min\{\frac{1}{m-1-q}, -\frac{n}{q}\}, \quad q<0.
\end{array}
\right.
\end{align*}
Under the above choice of $\lambda, \theta$, we know $u_{1,\rho}$ is a $C^1$ solution to (\ref{ori-ode}) on $(0,\rho)$. Besides,
\begin{align}\label{A1-rho-1}
\lim\limits_{\rho\to+\infty}A_{1, \rho}=\lim\limits_{\rho\to+\infty}\left(\int_{0}^{\rho}(1-e^{-\frac{x}{\rho}})^{\theta}dx\right)^{-1}=0,
\end{align}
It follows that
\begin{equation}\label{u1-rho-1}
\lim\limits_{\rho\to+\infty}u_{1,\rho}^{\prime}(r)=0,
\end{equation}
Moreover, applying L'Hospital's rule, we have
\begin{equation}\label{u1-rho-2}
\lim\limits_{\rho\to+\infty}u_{1,\rho}(r)=\lim\limits_{\rho\to+\infty}
\frac{u_{1, \rho}(r)}{u_{1, \rho}(0)}=\lim\limits_{\rho\to+\infty}
\frac{\int_{r}^{\rho}(1-e^{-\frac{x}{\rho}})^{\theta}dx}{\int_{0}^{\rho}(1-e^{-\frac{x}{\rho}})^{\theta}dx}=1,
\end{equation}

\textbf{$\bullet$ Case of $(p,q)\in G_2\setminus\{q=m\}$}, let
\begin{align}\label{rad-ori-2}
u_{2,\rho}(r)=A_{2,\rho}\int_{r}^{2\rho}\left(1-e^{-\frac{x}{2\rho}}\right)^{\theta}dx,
\end{align}
where $r\in[0, \rho]$, $\rho\geq R_0$, $\theta>0$, and
$A_{2,\rho}$ is a constant such that $u_{2, \rho}(0)=1$.
Similarly
\begin{align*}
(S|u^{\prime}_{2,\rho}|^{m-2}u^{\prime}_{2,\rho})^{\prime}\leq -\frac{A^{m-1}_{2,\rho}\theta(m-1)}{2\rho}S(1-e^{-\frac{r}{2\rho}})^{\theta(m-1)-1}e^{-\frac{1}{2}},
\end{align*}
Let us denote
$$L= \int_{\rho}^{2\rho}\left(1-e^{-\frac{x}{2\rho}}\right)^{\theta}dx.$$
Note
\begin{align*}
0<LA_{2,\rho}\leq u_{2, \rho}\leq 1, \quad |u^{\prime}_{2, \rho}|= A_{2,\rho}(1-e^{-\frac{r}{2\rho}})^{\theta},
\end{align*}
we obtain
\begin{align*}
\lambda Su_{2,\rho}^{p}|u_{2, \rho}^{\prime}|^q \leq
\lambda S \max\{1, L^pA_{2, \rho}^p\}A_{2, \rho}^q(1-e^{-\frac{r}{2\rho}})^{q\theta}.
\end{align*}
To suffice that $u_{2, \rho}$ is a solution to (\ref{ori-ode}) on $[0,\rho]$, by applying the same argument as in case of
$(p, q)\in G_1$, we need that
\begin{align}\label{lambda}
0<\lambda\leq
\frac{A_{2, \rho}^{m-1-q}\theta(m-1)}{2\rho\max\{1, L^p A_{2, \rho}^p\}}(1-e^{-\frac{r}{2\rho}})^{\theta(m-1-q)-1}e^{-\frac{1}{2}},
\end{align}
Since $q>m$ in the case of $(p,q)\in G_2\setminus\{q=m\}$, it follows that the existence of $\lambda>0$ in (\ref{lambda}),
and $\lambda$ can be chosen satisfying
\begin{equation*}
0<\lambda\leq
\frac{A_{2, \rho}^{m-1-q}\theta(m-1)}{2\rho\max\{1, L^pA_{2, \rho}^p\}}(1-e^{-\frac{1}{2}})^{\theta(m-1)-1-q\theta}e^{-\frac{1}{2}}.
\end{equation*}
Under the above choice of $\lambda$, then we know $u_\rho$ is a $C^1$ solution of (\ref{ori-ode}) for $r\in[0, \rho]$.
Besides,
\begin{align*}
\lim\limits_{\rho\to+\infty}A_{2, \rho}=\lim\limits_{\rho\to+\infty}\left(\int_{0}^{2\rho}(1-e^{-\frac{x}{2\rho}})^{\theta}dx\right)^{-1}=0,
\end{align*}
and
\begin{equation}\label{u-ori-der}
\lim\limits_{\rho\to+\infty}u_{2,\rho}^{\prime}(r)=0,
\end{equation}
and
\begin{equation}\label{u-ori-1}
\lim\limits_{\rho\to+\infty}u_{2,\rho}(r)=\lim\limits_{\rho\to+\infty}
\frac{u_{2, \rho}(r)}{u_{2, \rho}(0)}=\lim\limits_{\rho\to+\infty}
\frac{\int_{r}^{2\rho}(1-e^{-\frac{x}{2\rho}})^{\theta}dx}{\int_{0}^{2\rho}(1-e^{-\frac{x}{2\rho}})^{\theta}dx}=1,
\end{equation}

\vskip2ex
Now we come to the final step to show how to glue $u_{\eta}$ and $u_{i,\rho} (i=1,2)$ in a proper way.

\textbf{Step 3, gluing the solution:\;}
From (\ref{u-der<0}), we have
\begin{align}
\frac{u^{\prime}_{\eta}(R_0)}{u_{\eta}(R_0)}<0.
\end{align}
From (\ref{u1-rho-1})-(\ref{u1-rho-2}), and (\ref{u-ori-der})-(\ref{u-ori-1}), we obtain
\begin{align}\label{der-1}
\lim_{\rho\to+\infty}\frac{u_{i, \rho}^{\prime}(R_0)}{u_{i, \rho}(R_0)}=0,\quad\mbox{for $i=1,2$}.
\end{align}

We claim there exists $\rho_0>R_0$ such that
\begin{align}\label{claim}
\frac{u_{i,\rho_0}^{\prime}(R_0)}{u_{i, \rho_0}(R_0)}=\frac{u^{\prime}_{\eta}(R_0)}{u_{\eta}(R_0)},\quad\mbox{for $i=1,2$}.
\end{align}

$\bullet$ \textbf{In case of $(p,q)\in G_1$}, we have
\begin{align}\label{der-2}
\lim_{\rho\downarrow R_0}\frac{u_{1, \rho}^{\prime}(R_0)}{u_{1,\rho}(R_0)}=-\infty.
\end{align}
Let us fix $R_0$, we know $\frac{u_{\rho}^{\prime}(R_0)}{u_{\rho}(R_0)}$ is a continuous function of $\rho_0$ in $(R_0, +\infty)$.
Combining with (\ref{der-1}) and (\ref{der-2}), we know there exists a $\rho_0>R_0$ such that
\begin{align*}
\frac{u_{1,\rho_0}^{\prime}(R_0)}{u_{1, \rho_0}(R_0)}=\frac{u^{\prime}_{\eta}(R_0)}{u_{\eta}(R_0)}.
\end{align*}
Fix this $\rho_0$, then there exists some $\tau\in (0, +\infty)$ such that
\begin{align*}
u_{\eta}(R_0)=\tau u_{1,\rho_0}(R_0), \quad u_{\eta}^{\prime}(R_0)=\tau u_{1,\rho_0}^{\prime}(R_0).
\end{align*}

$\bullet$ \textbf{In case of $(p, q)\in G_2\setminus\{q=m\}$}, we have
\begin{align*}
\frac{u_{2, \rho}^{\prime}(R_0)}{u_{2, \rho}(R_0)}=-\frac{(1-e^{-\frac{R_0}{2\rho}})^{\theta}}{\int_{R_0}^{2\rho}(1-e^{-\frac{x}{2\rho}})^{\theta}dx}
\end{align*}
and
\begin{align*}
\frac{u_{\eta}^{\prime}(R_0)}{u_{\eta}(R_0)}=-\frac{\eta}{m-1}R_0^{-1}(\ln R_0)^{-1},
\end{align*}
We will show there exists $\rho$ such that
\begin{align}\label{urho-2-0}
\frac{u_{\eta}^{\prime}(R_0)}{u_{\eta}(R_0)}>\frac{u_{2, \rho}^{\prime}(R_0)}{u_{2, \rho}(R_0)}.
\end{align}
The above is equivalent to
\begin{align}\label{urho-2-1}
\frac{\eta}{m-1}R_0^{-1}(\ln R_0)^{-1} \int_{R_0}^{2\rho}(1-e^{-\frac{x}{2\rho}})^{\theta}dx<(1-e^{-\frac{R_0}{2\rho}})^{\theta},
\end{align}
Let $\rho=R_0$, we know
\begin{align}\label{urho-2-2}
\mbox{LHS of (\ref{urho-2-1})}\leq \frac{\eta}{m-1}(\ln R_0)^{-1}(1-e^{-1})^{\theta},
\end{align}
and
\begin{align}\label{urho-2-3}
\mbox{RHS of (\ref{urho-2-1})}=(1-e^{-\frac{1}{2}})^{\theta},
\end{align}
Combining with (\ref{urho-2-2}) and (\ref{urho-2-3}), we know, if we choose $R_0$ large enough, then (\ref{urho-2-0}) is satisfied.

Hence, for such fixed $R_0$, we have
\begin{align}
0=\lim_{\rho\to+\infty}\frac{u_{2, \rho}^{\prime}(R_0)}{u_{2, \rho}(R_0)}>\frac{u_{\eta}^{\prime}(R_0)}{u_{\eta}(R_0)}>\frac{u_{2, R_0}^{\prime}(R_0)}{u_{2, R_0}(R_0)}
\end{align}
Since $\frac{u_{2, \rho}^{\prime}(R_0)}{u_{2, \rho}(R_0)}$ is a continuous function of $\rho$, hence there exists some
$\rho>R_0$ such that
\begin{align*}
\frac{u_{2, \rho}^{\prime}(R_0)}{u_{2, \rho}(R_0)}=\frac{u_{\eta}^{\prime}(R_0)}{u_{\eta}(R_0)}.
\end{align*}
hence, we finish the claim (\ref{claim}).

In both cases, from (\ref{claim}), there exists some $\tau=\tau(i)>0$ such that for
\begin{align*}
u_{\eta}(R_0)=\tau u_{i,\rho}(R_0),\quad u_{\eta}^{\prime}(R_0)=\tau u_{i,\rho}^{\prime}(R_0).
\end{align*}
Define $u$ as follows
\begin{align}\label{u-const}
u=\left\{
\begin{array}{ll}
\tau u_{i,\rho_0}, \quad r\in [0, R_0], \\
u_{\eta}, \quad r\in (R_0, +\infty).
\end{array}
\right.
\end{align}
Hence $u(r)\in C^1(0,\infty)$.
Moreover, $u\in W_{loc}^{1,m}(M)$ satisfies
\begin{equation}\label{glue-1}
\Delta_m u+\lambda_{\rho_0} \tau^{m-1-p-q}u^{p}|\nabla u|^q\leq0, \quad\mbox{in $B_{R_0}$},
\end{equation}
and also
\begin{equation}\label{glue-2}
\Delta_m u+u^{p}|\nabla u|^q\leq0, \quad\mbox{in $M\setminus B_{R_0}$}.
\end{equation}
where $\lambda_{\rho_0}$ satisfies (\ref{lambda-1-rho}) for $\rho=\rho_0$.

Combining with (\ref{glue-1}) and (\ref{glue-2}), we derive that $u$ is a weak positive solution to
\begin{equation}
\Delta_m u+\delta u^{p}|\nabla u|^q\leq0,\quad\mbox{on $M$},
\end{equation}
where $\delta=\min\{\lambda_{\rho} \tau^{m-1-p-q}, \;1\}$.

For the case of $p+q\neq m-1$, making the change
$u\to c_1u$ where
$$c_1=\delta^{-\frac{1}{ p+q-m+1}}.$$
Hence, we obtain a nontrivial positive weak solution $u$ to (\ref{equ}) on $M$.

While for the case of $p+q=m-1$, let us make the change $u=v^{a}$, where $a$ is to be chosen later.
It is easy to verify that $v$ satisfy
\begin{align*}
\Delta_m v+\delta a^{-p}v^p|\nabla v|^q\leq(1-a)(m-1)v^{-1}|\nabla v|^m.
\end{align*}
Since under this case $-p>1$, we choose $a$ large enough such that $a>1$ and $\delta a^{-p}\geq1$, we obtain that $v$ satisfies
\begin{equation*}
\Delta_m v+v^p|\nabla v|^q\leq0.
\end{equation*}
which yields $v$ is a nontrivial positive weak solution to (\ref{equ}) on $M$. Thus, we complete proof of Theorem \ref{thm-ex}
\textbf{(A)} and \textbf{(B)}.
\end{proof}

\vskip3ex

\begin{proof}[proof of Theorem \ref{thm-ex} \textbf{(C)}]
By Theorem \ref{thm-ex} \textbf{(A)},
we know when $q>m-1$, there exists a manifold $M$ satisfying (\ref{vol-3}), i.e.
\begin{align*}
V(r) \lesssim r^{\frac{q}{q-m+1}} (\ln r)^{\frac{1}{q-m+1} +\epsilon},\quad\mbox{for all large $r$},
\end{align*}
which admits a nontrivial positive solution $v$ to
\begin{align*}
\Delta_mv + |\nabla v|^q \leq 0.
\end{align*}
Define
\begin{align*}
u=v+1.
\end{align*}
Since $(p,q)\in G_3$, then it is easy to check that $u$ is a nontrivial positive weak solution to (\ref{equ}) on $M$.
\end{proof}

\vskip3ex
\begin{proof}[Proof of Theorem \ref{thm-ex} \textbf{(D)}]
We still work on the model manifold $(\mathbb{R}^n, g)$ as in the proof of Theorem \ref{thm-ex} \textbf{(A)} and \textbf{(B)}, but with different metric $g$ such that
\begin{align*}
S(r)=e^{\lambda r},\quad \mbox{as for all $r\geq R_0$}.
\end{align*}
and
\begin{align*}
V(r)\lesssim e^{\lambda r}, \quad \mbox{as for all $r\geq R_0$}.
\end{align*}
where $R_0$ will be determined later.

We construct two parts of the solution: one is near infinity, and the other is near origin point, then we glue the two parts in a proper way.

\textbf{Solution near infinity:}
Let $u_{\eta}(r)=\lambda^{\frac{1}{p}}+r^{-\eta}$, where $\eta>0$ is to be chosen later. Noting
\begin{align*}
(S|u^{\prime}|^{m-2}u^{\prime})^{\prime}&=-e^{\lambda r}\eta^{m-1}r^{-(\eta+1)(m-1)}\nonumber\\
&\quad +e^{\lambda r}\eta^{m-1}(\theta+1)(m+1)r^{-(\eta+1)(m-1)-1}
\end{align*}
and
\begin{align*}
Su^p|\nabla u|^{m-1}=e^{\lambda r}(\lambda^{\frac{1}{p}}+r^{-\eta})^p\eta^{m-1}r^{-(\eta+1)(m-1)},
\end{align*}

Clearly, $(S|u^{\prime}|^{m-2}u^{\prime})^{\prime}+Su^p|\nabla u|^{m-1}\leq0$ is equivalent to
\begin{align}
e^{\lambda r}\eta^{m-1}r^{-(\eta+1)(m-1)-1}[-r+(\eta+1)(m-1)+(\lambda^{\frac{1}{p}}+r^{-\eta})^pr]\leq0
\end{align}

Noting that $(\lambda^{\frac{1}{p}}+r^{-\eta})^p=\lambda^{\frac{1}{p}}+pr^{-\eta}+o(r^{-2\eta})$, we obtain that
if $0<\eta<1$, then
\begin{align}
\lim_{r\to\infty}[-r+(\eta+1)(m-1)+(\lambda^{\frac{1}{p}}+r^{-\eta})^pr]=-\infty.
\end{align}
Thus, there exists some large $R_0$ such that $u_{\eta}$ is a solution to (\ref{ori-ode}) near infinity.

\textbf{Solution near origin:} When $q=m-1$, applying the same arguments as in the proof of Theorem \ref{thm-ex} \textbf{(B)}, one can verify that $u_{2,\rho}(r)$ as defined in (\ref{rad-ori-2}) is the solution to (\ref{ori-ode}) with a modification of $\rho$.

Repeating the same procedure as in proof of Theorem \ref{thm-ex} for the case of $G_2\setminus \{q=m\}$, one can construct
that $u$ as in (\ref{u-const}) is a nontrivial weak positive solution to (\ref{equ}).
Hence, we finish the proof of \ref{thm-ex} \textbf{(D)}.
\end{proof}
\vskip3ex
\begin{proof}[Proof of Theorem \ref{thm-ex} \textbf{(E)}]
We continue to work on the model manifold $(\mathbb{R}^n, g)$ as in the proof of Theorem \ref{thm-ex} \textbf{(A)} and \textbf{(B)}, but with different metric $g$ such that
\begin{align}
V(r)= e^{\iota r},\quad \mbox{for all $r\geq R_0$},
\end{align}
and
\begin{align}
S(r)=\iota e^{\iota r},\quad \mbox{as for all $r\geq R_0$}.
\end{align}
where $\iota, R_0$ are to be chosen later.
%\begin{align*}
%\psi(r) = \left\{
%\begin{array}{ll}
%r, &\quad\text{for small $r$}, \\
%e^{\frac{r^m}{n-1}}, &\quad\text{for large $r$},
%\end{array}
%\right.
%\end{align*}
%Under such metric, it is easy to verify that $V(r)$ behaves like $e^{r^m}$ as $r\to\infty$.

We still construct two parts of the solution, but we need to deal with the case of $p>0, q=m-1-p$ and $p=0, q=m-1$ individually.

\vskip1ex

\textbf{(1). Case of $p>0, q=m-1-p$}.

\textbf{Solution near infinity:}
Let $R_0$ satisfy
\begin{align}\label{5-00}
R_0<\frac{(p+1)^{\frac{p}{p+1}}(m-1)^{\frac{1}{p+1}}}{p}.
\end{align}
We take
\begin{align}\label{5-0}
u_{\eta}(r) = e^{-\eta r},\quad\mbox{for all $r\geq R_0$}
\end{align}
where $\eta$ is to be determined later.

Since
\begin{align}\label{5-1}
(S|u_{\eta}^{\prime}|^{m-2}u_{\eta}^{\prime})^{\prime}&=-\left(\iota\eta^{m-1}e^{[\iota -\eta (m-1)]r}\right)^{\prime}\nonumber\\
&=-\iota\eta^{m-1}[\iota-\eta (m-1)]e^{[\iota-\eta (m-1)]r},
\end{align}
and
\begin{align}\label{5-2}
Su_{\eta}^p|u_{\eta}^{\prime}|^q=\iota\eta^q e^{[\iota-\eta(m-1)]r},
\end{align}
If $u_{\eta}$ is a solution to (\ref{rad}) near infinity, combining with (\ref{5-1}) and (\ref{5-2}), we arrive
\begin{align*}
\eta^q-\eta^{m-1}[\iota -\eta (m-1)]\leq0,
\end{align*}
which is equivalent to
\begin{align}\label{5-3}
\iota\geq \eta^{-p}+\eta(m-1),
\end{align}
Since the RHS of (\ref{5-3}) attains minimum at $\eta=\left(\frac{p}{m-1}\right)^{\frac{1}{p+1}}$, hence we know if
\begin{align}\label{5-3-iota}
\iota\geq H(p):=\left(\frac{m-1}{p}\right)^\frac{p}{p+1} + (m-1) \left(\frac{m-1}{p}\right)^{\frac{-1}{p+1}},
\end{align}
there always exists a $\eta>0$ such that (\ref{5-0}) is a solution to (\ref{rad}) near infinity.

\textbf{Solution near origin:} Let
\begin{align}
u_{0}(r)=c_0-\frac{c_0(pr)^{\frac{p+1}{p}}}{(p+1)(m-1)^{\frac{1}{p}}},\quad\mbox{$r\in[0, \frac{(p+1)^{\frac{p}{p+1}}(m-1)^{\frac{1}{p+1}}}{p})$},
\end{align}
where $c_0$ is to be chosen later.
It is easy to verify that
\begin{align}\label{5-5}
(S|u_{0}^{\prime}|^{m-2}u_0^{\prime})^{\prime}&=(m-1)S(-u_0^{\prime})^{m-2}u_0^{\prime\prime}-S^{\prime}(-u_0^{\prime})^{m-1}\nonumber\\
&\leq (m-1)S(-u_0^{\prime})^{m-2}u_0^{\prime\prime}.
\end{align}
and
\begin{align}\label{5-6}
Su_0^p|u_0^{\prime}|^q\leq Sc_0^p(-u_0^{\prime})^{m-1-p},
\end{align}
where we have used that $S$ is an increasing function.

Since $u_{0}(r)$ is a solution to
\begin{align*}
(m-1)u^{\prime\prime}+c_0^p(-u^{\prime})^{1-p}=0.
\end{align*}
Hence, combining (\ref{5-5}) and (\ref{5-6}), we know $u_{0}(r)$ is a solution to (\ref{rad}) near origin point.
%Moreover,
%\begin{align}
%u_0^{\prime}(R_0)=-\frac{c_0p^{\frac{p+1}{p}}R_0^{\frac{1}{p}}}{(p+1)(m-1)^{\frac{1}{p}}}<0.
%\end{align}

Now we choose $c_0$ and $R_0$ to glue these two parts at $r=R_0$.
To obtain this, we need
\begin{align*}
\left\{
\begin{array}{ll}
u_{\eta}(R_0)=u_{0}(R_0), \\
u_{\eta}^{\prime}(R_0)=u_{0}^{\prime}(R_0).
\end{array}
\right.
\end{align*}
which is equivalent to
\begin{align}\label{5-8}
\left\{
\begin{array}{ll}
e^{-\eta R_0}=c_0-\frac{c_0(pR_0)^{\frac{p+1}{p}}}{(p+1)(m-1)^{\frac{1}{p}}}, \\
\\
-\eta e^{-\eta R_0}=-c_0\left(\frac{pR_0}{m-1}\right)^{\frac{1}{p}}.
\end{array}
\right.
\end{align}
By Intermediate Value theorem, it is easy to verify there exists a $R_0$ satisfying (\ref{5-00}) such that
\begin{align*}
\left(\frac{pR_0}{m-1}\right)^{\frac{1}{p}}=\eta-\frac{\eta(pR_0)^{1+\frac{1}{p}}}{(p+1)(m-1)^{\frac{1}{p}}},
\end{align*}
then it also follows the existence of $c_0$ by (\ref{5-8}).

Under the above choice of $R_0$ and $c_0$, let us define
\begin{align*}
u(r)=\left\{
\begin{array}{ll}
u_{0}(r), \quad r\in [0, R_0], \\
u_{\eta}(r), \quad r\in (R_0, +\infty).
\end{array}
\right.
\end{align*}
Noting that $u^p|\nabla u|^q\in L^{1}(B_{R_0}(o))$,
it follows that $u$ is a weak positive solution to (\ref{equ}).

\textbf{(2). Case of $p=0, q=m-1$.}
For the solution near infinity, let us define
\begin{align*}
u_{\eta}=e^{-\eta r},\quad\mbox{$r\geq R_0$}.
\end{align*}
One can verify that when
\begin{align}\label{5-iota}
\iota>1,
\end{align}
and
\begin{align*}
0<\eta\leq\frac{\iota-1}{m-1}
\end{align*}
then $u_{\eta}(r)$ is a solution to (\ref{rad}) near infinity.

For the solution near origin point, let us define
\begin{equation*}
u_{0}(r)=-r^\alpha+c_0,\quad\mbox{$r\leq R_0$},
\end{equation*}
where $\alpha>1$, $c_0$ are to be chosen later, and
\begin{align*}
0< R_0< \min\{c_0^{\frac{1}{\alpha}}, (m-1)(\alpha-1)\}.
\end{align*}
It is easy to verify that $u_0$ is a solution to (\ref{rad}) near origin.

Repeating the same procedure, we obtain there exists $R_0$, $c_0$ and $\alpha$ such that
\begin{align*}
u_{\eta}(R_0)=u_0(R_0),\quad u_{\eta}^{\prime}(R_0)=u_0^{\prime}(R_0).
\end{align*}
Let
\begin{align*}
u(r)=\left\{
\begin{array}{ll}
u_{0}(r), \quad r\in [0, R_0], \\
u_{\eta}(r), \quad r\in (R_0, +\infty).
\end{array}
\right.
\end{align*}
then $u$ is a weak positive solution to (\ref{equ}). Hence, we complete the proof of Theorem \ref{thm-ex} \textbf{(E)}.
\end{proof}

\begin{remark}
Let us compare $\iota$ defined in the proof of Theorem \ref{thm-ex} \textbf{(E)} and $\kappa$ introduced in Theorem \ref{thm-non} \textbf{(V)}.
Since $\iota$ satisfies
\begin{align*}
\iota\left\{
\begin{array}{ll}
>1, & p=0, q=m-1,\notag\\
\geq H(p), & p>0, q=m-1-p,
\end{array}
\right.
\end{align*}
and $\kappa$ satisfies
\begin{align*}
0<\kappa<\frac{\min\{m-1,1\}}{2C_1e}<\frac{\min\{m-1,1\}}{2e},
\end{align*}
where $H(p)$ is defined in (\ref{5-3-iota}), and we also have used that $C_1>1$.

Noting that $H(p)$ is increasing when $p\in[0,m-1)$, and decreasing when $p\in(m-1,+\infty)$, we obtain
$H(p)$ reaches maximum $m$ when $p=m-1$. Moreover,
\begin{align*}
\lim\limits_{p\downarrow0}H(p)=1,\quad \lim\limits_{p\to\infty}H(p)=m-1.
\end{align*}
It follows that $\kappa<\iota$ in both cases of $p=0$ and $p>0$, see Figure \ref{graph3}.

%\begin{figure}[h]
%\begin{tikzpicture}
%\begin{axis}[
%%xlabel=$p$, ylabel=$\iota$,
%xmin=-1, xmax=21, xmajorgrids=false, xtick={0},
%ymin=0, ymax=2, ymajorgrids=false, ytick={0}]
%\addplot[orange,thick,domain=0:20,samples=500]
%{((0.55/x)^(x/(x+1)))*(1+x)};
%\addlegendentry{$H(p)$}
%\addplot[blue,thick,domain=0:20,samples=50]
%{1/(1.5*e)};
%\addlegendentry{$\frac{\min\{m-1,1\}}{2e}$}
%\end{axis}
%\draw[dashed,gray] (0,4.42) -- (0.5,4.42);
%\draw[dashed,gray] (0,2.86) -- (0.3,2.86);
%\draw[dashed,gray] (0,1.7) -- (6.8,1.7);
%\draw[dashed,gray] (0,0.7) -- (0.3,0.7);
%\node at (-0.5,4.42) {$m$};
%\node at (-0.5,2.86) {$1$};
%\node at (-0.5,1.7) {$m-1$};
%\node at (-0.8,0.7) {$\frac{\min\{m-1,1\}}{2e}$};
%\draw[dashed,gray] (.49,0) -- (.49,4.4);
%%\node at (0.49,-0.6) {$m-1$};
%\end{tikzpicture}
%\caption{$\kappa,\iota$ }
%\label{graph3}
%\end{figure}

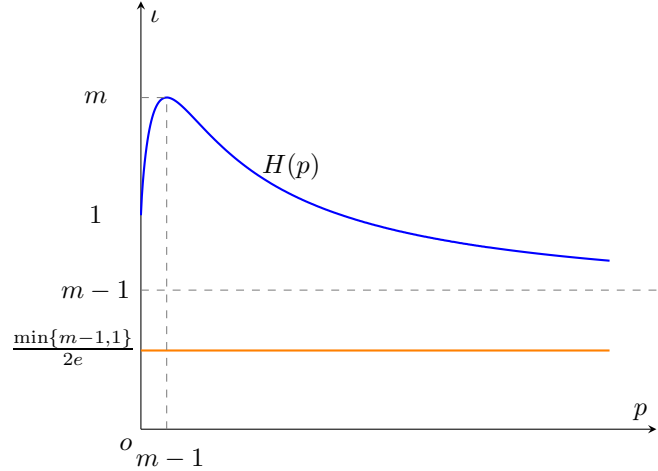
\begin{figure}[h]
\def \m {1.55}
\begin{tikzpicture}
\begin{axis}[
axis x line=middle,
axis y line=middle,
xlabel=$p$,
ylabel=$\iota$,
xmin=0, xmax=11, xmajorgrids=false, xtick={0},
ymin=0, ymax=2, ymajorgrids=false, ytick={0}]

\addplot[blue, thick, domain=0:10, samples=500] {(((\m-1)/x)^(x/(x+1)))*(1+x)};
% \addlegendentry{$H$}
\addplot[orange, thick, domain=0:10, samples=50] {1/e};
% \addlegendentry{$\frac{\min\{m-1, 1\}}{2e}$}
\addplot[dashed, gray, domain=0:11, samples=50] {\m-1+0.1};
\addplot[dashed, gray, domain=0:\m-1, samples=50] {\m};
\addplot[dashed, gray, domain=0:\m, samples=50] (\m-1,x);
\end{axis}
\node at (-0.6, 4.42) {$m$};
\node at (-0.6, 2.86) {$1$};
\node at (-0.6, 1.85) {$m-1$};
\node at (-0.9, 1.1) {$\frac{\min\{m-1, 1\}}{2e}$};
\node at (0.39, -0.39) {$m-1$};
\node at (-0.2,-0.2) {$o$};
\node at (2,3.5) {$H(p)$};
\end{tikzpicture}
\caption{$\kappa<\iota$ }
\label{graph3}
\end{figure}
\end{remark}

\vskip3ex
\begin{proof}[Proof of Theorem \ref{thm-ex} \textbf{(F)}]
We work on the model manifolds as before but requiring that
\begin{align*}
S(r)=e^{\lambda r^{\gamma}\ln r},\quad\mbox{$r\geq R_0$},
\end{align*}
and
\begin{align*}
V(r)\lesssim e^{\lambda r^{\gamma}\ln r},
\end{align*}
where $\lambda$ is any positive number, and $R_0$ is to be chosen later.

One can verify that when
$$\gamma>2(m-1-q)+1,$$
the function
\begin{align*}
u_{\infty}(r)=e^{\frac{\ln r}{r}},
\end{align*}
is a solution to (\ref{rad}) near infinity. Moreover, $u_{\infty}(r)$ is decreasing.

Repeating the same procedure as in the proof of Theorem \ref{thm-ex} \textbf{(D)}, we derive that there exists a nontrivial positive solution to (\ref{equ}). Hence, we complete the proof of Thereom \ref{thm-ex} \textbf{(F)}.
%It follows that
%\begin{align}
%u^{\prime}(r)=-e^{\frac{\ln r}{r}}\frac{\ln r-1}{r^2}.
%\end{align}
%Noting that
%\begin{align}
%(S|u^{\prime}|^{m-2}u^{\prime})^{\prime}=-e^{\lambda r^{\gamma}\ln r+(m-1)\frac{\ln r}{r}}\frac{(\ln r-1)^{m-1}}{r^{2(m-1)}}I_1
%\nonumber\\
%%\lambda \gamma r^{\gamma-1}\ln r+\lambda r^{\gamma-1}-(m-1)\theta\frac{\ln r-1}{r^2}+\frac{m-1}{\ln r-1}-\frac{2(m-1)}{r}
%\end{align}
%where
%$$I_1=\lambda \gamma r^{\gamma-1}\ln r+\lambda r^{\gamma-1}
%-(m-1)\frac{\ln r-1}{r^2}+\frac{m-1}{\ln r-1}-\frac{2(m-1)}{r},$$
%and
%\begin{align}
%Su^p|\nabla u|^q=e^{\lambda r^{\gamma}\ln r+(p+q)\frac{\ln r}{r}}\frac{(\ln r-1)^q}{r^{2q}},
%\end{align}
%It is easy to verify that
%\begin{align}
%(S|u^{\prime}|^{m-2}u^{\prime})^{\prime}+Su^p|\nabla u|^q\leq0,
%\end{align}
\end{proof}

\textbf{Acknowledgements}\;
The authors would like to thank Dr. Qingsong Gu (Nanjing University), and Prof. Xueping Huang (Nanjing University of Information Science $\&$ Technology) for helpful communication.

\bibliographystyle{amsalpha}

\end{document}